\theoremstyle{plain}
\def\nd{\noindent}
\def\thend{\rule{3mm}{3mm}}
\newtheorem{claim}{Claim}[section]
\newtheorem{theorem}{Theorem}[section]
\newtheorem{proposition}{Proposition}[section]
\newtheorem{lemma}{Lemma}[section]
\newtheorem{definition}{Definition}[section]
\newtheorem{corollary}{Corollary}[section]
\newtheorem*{theorem*}{Theorem}
\numberwithin{equation}{section}
\begin{document}
\title{Multiple normalized solutions to a logarithmic Schr\"{o}dinger equation via Lusternik-Schnirelmann category}
\author{ Claudianor O. Alves,\footnote{C.O. Alves was partially supported by CNPq/Brazil   grant 304804/2017-7.} \,\, Chao Ji\footnote{Corresponding author} \footnote{C. Ji was partially supported by National Natural Science Foundation of China (No. 12171152).} }

\maketitle

\begin{abstract}
In this paper  our objective is to investigate the existence of multiple normalized solutions to the logarithmic Schr\"{o}dinger equation given by
\begin{align*}
 \left\{
\begin{aligned}
&-\epsilon^2 \Delta u+V( x)u=\lambda u+u \log u^2, \quad
\quad
\hbox{in }\mathbb{R}^N,\\
&\int_{\mathbb{R}^{N}}|u|^{2}dx=a^{2}\epsilon^N,
\end{aligned}
\right.
\end{align*}
where $a, \epsilon>0, \lambda \in \mathbb{R}$ is an unknown parameter that appears as a Lagrange multiplier and $V: \mathbb{R}^N \rightarrow[-1, \infty)$ is a continuous function. Our analysis demonstrates that the number of normalized solutions of the equation is associated with the topology of the set where the potential function $V$ attains its minimum value. To prove the main result, we employ minimization techniques and use the Lusternik-Schnirelmann category. Additionally, we introduce a new function space where the energy functional associated with the problem is of class $C^1$.

\end{abstract}

{\small \textbf{2010 Mathematics Subject Classification:} 35J10, 35J20, 26A27.}

{\small \textbf{Keywords:} Logarithmic Schr\"{o}dinger equation, Normalized solutions, Multiplicity, Lusternik-Schnirelman category,  Minimization technique}

\section{Introduction}

In recent decades, there has been significant interest in studying the following nonlinear Schr\"{o}dinger equation in $\mathbb{R}^N$
\begin{equation}\label{SCH}
	i\frac{\partial \psi}{\partial t}+\triangle \psi-V(x)\psi+h(|\psi|^{2})\psi=0
	\quad
	\hbox{in }\mathbb{R}^N
\end{equation}
where $\psi=\psi(t,x)$, $V$ is the potential, and $h$ is a function. One important aspect related to equation \eqref{SCH} is the search for its standing wave solutions. A standing wave is a solution of the form $\psi(t, x)=e^{-i\lambda t}u(x)$, where $\lambda\in \mathbb{R}$ and $u:\mathbb{R}^N\rightarrow \mathbb{R}$ is a time-independent function that satisfies the following nonlinear elliptic equation
\begin{equation} \label{g00}
-\Delta u+V(x)u=\lambda u+g(u), \quad \text{in} \quad \mathbb{R}^N,
\end{equation}
where $g(u)=h(|u|^2)u$.\\

There are two aspects to the study of standing waves of the Schr\"{o}dinger equation \eqref{SCH}.  On one hand, one can choose the fixed frequency $\lambda\in \mathbb{R}$ and investigate the existence of nontrivial solutions for (\ref{g00}) obtained as critical points of the energy functional $J_{\lambda}:H^{1}(\mathbb{R}^N) \to \mathbb{R}$ given by
$$
J_\lambda(u)=\frac{1}{2}\int_{\mathbb{R}^N}\Big(|\nabla u|^2+(V(x)-\lambda) |u|^2\Big)\,dx-\int_{\mathbb{R}^N}G(u)\,dx,
$$
where $G(t)=\int_{0}^{t}g(s)\,ds$. When $g(u)=u\log u^2$, the equation \eqref{g00} has significant physical applications in quantum mechanics, quantum optics, nuclear physics, transport and diffusion phenomena, open quantum systems, effective quantum gravity, theory of superfluidity, and Bose-Einstein condensation (see \cite{z} and the references therein). In addition to its applications, equation \eqref{g00} presents challenging mathematical problems. For instance, the associated energy functional is not $C^{1}$ smooth; in fact, it is not even continuous, but only lower semicontinuous, due to the existence of $u \in H^{1}(\mathbb{R}^N)$ such that $\int_{\mathbb{R}^N}u^{2}\log u^2 , dx=-\infty$.  Indeed, one can consider a smooth function that satisfies
 $$
 u(x)=
 \left\{
 \begin{array}{l}
 (|x|^{N/2}\log(|x|))^{-1}, \quad |x| \geq 3, \\
 0, \quad\quad\quad\quad\quad\quad \quad\quad\,|x| \leq 2.	
 \end{array}
 \right.
 $$
 Various techniques have been employed by different authors to address the existence, multiplicity, and concentration of solutions under assumptions on the potential $V$ (see \cite{Ad, AlvesdeMorais, AlvesdeMoraisFigueiredo, AlvesChaobefore, AlvesChao, AlvesChao1, AlvesChao3, Cazenave, DMS, sz, sz2, WZ, ZW2020} and the references therein).\\

The pioneering work we would like to mention is by Cazenave \cite{Cazenave}. In \cite{Cazenave}, the author investigated the logarithmic Schr\"{o}dinger equation given by
$$
i u_t+\Delta u+u \log u^2=0, \quad(t, x) \in \mathbb{R} \times \mathbb{R}^N,
$$
on the space $W:=\left\{u \in H^1\left(\mathbb{R}^N\right) ; \int_{\mathbb{R}^N}\left|u^2 \log u^2\right| d x<\infty\right\}$ and considered a suitable Luxemburg norm type on $W$. More precisely, by taking the $\mathrm{N}$-function
$$
\begin{gathered}
A(s):=\left\{\begin{array}{c}
-\frac{1}{2} s^2 \log s^2, \quad \,\, \,\,\,0 \leq s \leq e^{-3}, \\
3 s^2+4 e^{-3} s-e^{-6}, \quad s \geq e^{-3},
\end{array}\right. \\
\|u\|_A=\inf \left\{\lambda>0 ; \int_{\Omega} A\left(\frac{|u|}{\lambda}\right) \leq 1\right\},
\end{gathered}
$$
and $\left(\|\cdot\|_{H^1\left(\mathbb{R}^N\right)}+\|\cdot\|_A\right)$ as the norm on $W$, the author showed the existence of infinitely many critical points for the functional
$$
L(u)=\frac{1}{2} \int_{\mathbb{R}^N}|\nabla u|^2 d x-\frac{1}{2} \int_{\mathbb{R}^N} u^2 \log u^2, \quad u \in W
$$
on the set $\Sigma:=\left\{u \in W ; \int_{\mathbb{R}^N}|u|^2 d x=1\right\}$.

Subsequently, additional frameworks were developed to investigate the logarithmic Schr\"{o}dinger equations. In their work \cite{DMS}, d'Avenia, Montefusco, and Squassina studied the existence, uniqueness, and multiplicity of solutions for the following equation using the critical point theory for nonsmooth functionals developed by Degiovanni and Zani in \cite{DZ}
$$
\left\{\begin{aligned}
&-\Delta u+u=u \log u^2, \text { in } \mathbb{R}^N, \\
& u \in H^1\left(\mathbb{R}^N\right).
\end{aligned}\right.
$$

In recent papers \cite{sz, sz2}, Squassina and Szulkin decomposed the functional associated with the logarithmic Schr\"{o}dinger equation into a sum of a $C^{1}$ functional and a convex lower semicontinuous functional. They investigated the existence of ground states and multiple solutions for logarithmic Schr"{o}dinger equations with periodic potential. Specifically, the authors used a decomposition of the form
\begin{equation} \label{decomposition}
F_2(s)-F_1(s)=\frac{1}{2}s^{2}\log s^{2}, \quad s\in \mathbb{R},
\end{equation}
where $F_1$ is a $C^1$ and nonnegative convex function, and $F_2$ is also a $C^1$ function with subcritical growth (see Section 2  in the subsequent discussion). Motivated by \cite{sz}, Alves and de Morais Filho \cite{AlvesdeMorais} investigated the existence and concentration of positive solutions to the following problem
$$
-{\epsilon}^2\Delta u+ V(x)u=u \log u^2, \,\, \mbox{in} \, \mathbb{R}^{N},
\eqno{(P_\epsilon)}
$$
where $\epsilon>0$. They required $V$ to satisfy a global assumption
\begin{equation}\label{ps1}
V_{\infty}:=\lim_{\vert x\vert\rightarrow \infty}\, V(x)> \inf_{x\in \mathbb{R}^N}\, V(x)=V_{0}>-1.
\end{equation}
Subsequently, Alves and Ji   \cite{AlvesChao} considered the multiple positive solutions to  $(P_\epsilon)$  under the same assumption (\ref{ps1}). Specifically, they demonstrated that the "shape" of the graph of the function $V$ influences the number of nontrivial solutions. Furthermore, under a local assumption on the potential $V$, Alves and Ji \cite{AlvesChaobefore} employed the penalization method to establish the existence of single-peak solutions for $(P_\epsilon)$.\\

Recently, Wang and Zhang \cite{WZ} developed a new method to study the logarithmic Schr\"{o}dinger equation, namely, power approximations. They uncovered a relationship between power-law nonlinear scalar field equations and logarithmic-law scalar field equations. Furthermore, they demonstrated that as $p\downarrow 2$ in the power-law scalar field equations, the ground state solutions converge to the ground state solutions of the logarithmic-law equations. Additionally, Zhang and Wang \cite{ZW2020} explored the concentration of nodal solutions for logarithmic scalar field equations by applying the ideas and methods presented in their paper \cite{WZ}.

Very recently,  we have observed that Alves and da Silva \cite{Ad} established the existence and multiplicity of positive solutions for logarithmic Schr\"{o}dinger equations by introducing a new function space where the energy functional is $C^{1}$. They used the Lusternik-Schnirelmann category theory in their study.

On the other hand, one can consider the case  $\lambda \in \mathbb{R}$ is unknown. In this case,  $\lambda\in \mathbb{R}$  appears as a Lagrange multiplier and the $L^{2}$-norm of solutions prescribed. From the physical point of view, this line of research holds particular significance as it accounts for the conservation of mass. Additionally, it provides valuable insights into the dynamic properties of standing waves in \eqref{SCH}, such as stability or instability \cite{BC, CL1}. In this paper, we focus primarily on the aspect.

If the potential $V\equiv 0$, a vast literature is dedicated to the following problem
\begin{align*}\label{problemnew}
 \left\{
\begin{aligned}
&-\Delta u=\lambda u+g(u), \quad
\quad
\hbox{in }\mathbb{R}^N,\\
&\int_{\mathbb{R}^{N}}|u|^{2}dx=a^{2},
\end{aligned}
\right.
\end{align*}
we refer to \cite{AJ1, CCM, AT, CL1, IM, IM1, jeanjean1, JL, Sh, Shu, Nicola1, Nicola2} and the references therein. In particular, in our scope, we would like to mention  the attainability of the $L^{2}$-constraint minimization problem
$$
E_{\alpha}=\inf\Big\{\frac{1}{2}\int_{\mathbb{R}^N}|\nabla u|^2 \,dx-\int_{\mathbb{R}^N} G(\vert u\vert) \,dx: u\in H^{1}(\mathbb{R}^N), \int_{\mathbb{R}^{N}}|u|^{2}dx=\alpha^{2}\Big\}.
$$
For the case $g(u)=\vert u\vert^{p-1}u \,\, (1<p<1+\frac{4}{N})$, by assuming $H^{1}$-precompactness of any minimizing sequences, Cazenave and Lions \cite{CL1} showed the attainability of the $L^{2}$-constraint minimization problem and orbital stability of the set of global minimizers. For this purpose, the strict subadditivity condition
\begin{equation}\label{Subadd}
E_{\alpha+\beta}<E_{\alpha}+E_{\beta}
\end{equation}
a crucial role in this context. This condition excludes the dichotomy of minimizing
sequences, and implies $H^{1}$-precompactness. However, establishing the validity of \eqref{Subadd} is challenging for general functions $g$. Shibata \cite{Sh} proved the subadditivity condition \eqref{Subadd} using a scaling argument. Furthermore, the author demonstrated that there exists $\alpha_{0}\geq 0$ such that a global minimizer exists for $\alpha>\alpha_{0}$, while no global minimizer exists for $\alpha<\alpha_{0}$. Recently, Jeanjean and Lu \cite{JL} extended these results to more general nonlinearities.

If the potential $V\not\equiv 0$, then the scaled function $u(\kappa x)$ in \cite{Sh} becomes less effective, making the $L^{2}$-constraint minimization problems more challenging. Consequently, the

the $L^{2}$-constraint minimization problems become more difficult. For this reason, the literature concerning this topic is far less broad. In \cite{IM}, Ikoma and Miyamoto performed a careful interaction estimate to exclude dichotomy. Then, they modified any minimizing sequence to be an approximated positive solution of the Euler-Lagrange equation and proved
the precompactness of the modified minimizing sequence. After that, Ikoma and Miyamoto \cite{IM1} applied their argument to a minimization problem with two constraint conditions and potentials. In our scope, we would like to mention \cite{AJ1}, where Alves and Ji investigated the existence of minimizer of the $L^{2}$-constraint minimization problem
$$
\Upsilon_a=\inf \left\{J(u): u\in H^{1}(\mathbb{R}^N), \int_{\mathbb{R}^{N}}|u|^{2}dx=a^{2} \right\}
$$
with various types of potentials. Here, $J$ is defined as
\begin{equation*}
J(u)=\frac{1}{2}\int_{\mathbb{R}^N}|\nabla u|^2 \,dx+\frac{1}{2}\int_{\mathbb{R}^N}V(x)|u|^2 \,dx-\frac{1}{q}\int_{\mathbb{R}^N} |u|^q \,dx,
\end{equation*}
where $a>0$,  $q \in (2,2+\frac{4}{N})$ with $N \geq 2$ and the potential $V:\mathbb{R}^N \to [0,+\infty)$ is  a bounded and continuous function.
Moreover, they also established similar results for nonlinear Schr\"{o}dinger equations with a magnetic field and different types of potentials. Inspired by this work, Alves and Thin \cite{AT} studied the existence of multiple normalized solutions to the following class of elliptic problems
$$
\left\{\begin{array}{l}
-\Delta u+V(\epsilon x) u=\lambda u+f(u), \quad \text { in } \mathbb{R}^N, \\
\int_{\mathbb{R}^N}|u|^2 d x=a^2,
\end{array}\right.
$$
where $\epsilon>0$, $V: \mathbb{R}^N \rightarrow[0, \infty)$ is a continuous function, and $f$ is a differentiable function with $L^2$-subcritical growth.  The number of normalized solutions is found to be related to the topology of the set where the potential $V$ attains its minimum value.

Motivated by the works of \cite{Ad, AJ1, AT}, this paper aims to investigate the multiplicity of  normalized solutions to  the following logarithmic Schr\"{o}dinger equation
\begin{align}\label{11*}
 \left\{
\begin{aligned}
&-\epsilon^2\Delta u+V( x)u=\lambda u+u \log u^2, \,\,
\,\,
\hbox{in }\mathbb{R}^N,\\
&\int_{\mathbb{R}^{N}}|u|^{2}dx=a^{2}\epsilon^N,
\end{aligned}
\right.
\end{align}
where $a, \epsilon>0, \lambda \in \mathbb{R}$ is an unknown parameter that appears as a Lagrange multiplier and $V: \mathbb{R}^N \rightarrow[-1, \infty)$ is a continuous function.\\

By introducing a simple change of variable, the problem \eqref{11*} is equivalent to
\begin{align}\label{11}
	\left\{
	\begin{aligned}
		&-\Delta u+V(\epsilon x)u=\lambda u+u \log u^2, \quad
		\quad
		\hbox{in }\mathbb{R}^N,\\
		&\int_{\mathbb{R}^{N}}|u|^{2}dx=a^{2}.
	\end{aligned}
	\right.
\end{align}
Related to the potential function $V$, we make the assumption that $V \in C\left(\mathbb{R}^N, \mathbb{R}\right) \cap L^{\infty}\left(\mathbb{R}^N\right)$ and
satisfies the condition
$$
-1\leq V_0=\inf _{x \in \mathbb{R}^N} V(x)<\liminf _{|x| \rightarrow+\infty} V(x)=V_{\infty}. \leqno{(V_1)}.
$$
Without of loss of generality, we assume that $V(0)=V_0$.

In \cite{Ad}, Alves and da Silva have showed that the function $F_1$ in \eqref{decomposition} is a $\mathrm{N}$-function that satisfies the so-called $\left(\Delta_2\right)$-condition. This fact allows us to consider the reflexive and separable Orlicz space defined as
$$
L^{F_1}\left(\mathbb{R}^N\right)=\left\{u \in L_{l o c}^1\left(\mathbb{R}^N\right):  \int_{\mathbb{R}^N} F_1(|u|) d x<+\infty\right\}.
$$
Hereafter, we define the following work space
$$
X=H^{1}(\mathbb{R}^N) \cap L^{F_1}\left(\mathbb{R}^N\right).
$$
A solution $u$ to problem \eqref{11} with $\int_{\mathbb{R}^N}|u|^2 d x=a^2$ corresponds to a critical point of the functional
\begin{equation*}\label{functional}
J_\epsilon(u)=\frac{1}{2} \int_{\mathbb{R}^N}\left(|\nabla u|^2+(V(\epsilon x)+1)|u|^2\right) d x+\int_{\mathbb{R}^N} F_1(u)d x-\int_{\mathbb{R}^N} F_2(u)d x,\,\, u \in X,
\end{equation*}
restricted to the sphere
$$
S(a)=\left\{u \in X:|u|_2=a\right\}.
$$
It can be easily shown that $J_{\epsilon}\in C^{1}(X, \mathbb{R})$.
Furthermore, by using the inequality
$$
\left|t \log t^2\right| \leq C\left(1+|t|^p\right), \quad p \in\left(2,2^*\right)
$$
and  employing the standard arguments of regularity theory, a critical point of $J_{\epsilon}$ in $X$ is a classical solution of \eqref{11} (see, e.g.,  \cite[Section~1]{AlvesdeMorais}).

The main objective of this paper is to establish the existence of multiple normalized solutions for \eqref{11} under the assumption of $(V_1)$,  using the Lusternik-Schnirelmann category of the sets $M$ and $M_\delta$ defined as
$$
M=\left\{x \in \mathbb{R}^N: V(x)=V_0\right\}
$$
and
$$
M_\delta=\left\{x \in \mathbb{R}^N: \operatorname{dist}(x, M) \leq \delta\right\} .
$$

Now, we can state our main result as follows.

\begin{theorem}\label{T1} Suppose that $V$ satisfies $\left(V_1\right)$. Then $\delta>0$, there exist $a^*>0$ and $\epsilon_0>0$ such that \eqref{11} admits at least $\text{cat}_{M_{\delta}}(M)$ couples $\left(u_j, \lambda_j\right) \in X \times \mathbb{R}$ of weak solutions for $a>a^*$ and $0<\epsilon<\epsilon_0$ with $\int_{\mathbb{R}^N}\left|u_j\right|^2 d x=a^2, \lambda_j<0$ and $J_\epsilon\left(u_j\right)<0$. Moreover, if $u_\epsilon$ denotes one of these solutions and $\xi_\epsilon$ is the global maximum of $\left|u_\epsilon\right|$, then
$$
\lim _{\epsilon \rightarrow 0} V\left(\epsilon \xi_\epsilon\right)=V_0 \text {. }
$$
\end{theorem}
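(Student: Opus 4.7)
My plan is to follow the Benci--Cerami / Lusternik--Schnirelmann scheme, adapted to the $L^2$-constrained setting and to the Orlicz framework $X$ introduced above, with the limit (autonomous) problem playing the usual role.

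First I would analyze the two autonomous problems obtained by freezing $V(\epsilon x)\equiv V_0$ and $V(\epsilon x)\equiv V_\infty$, defining
\[
E_{V_0}(a)=\inf_{u\in S(a)} J_{V_0}(u), \qquad E_{V_\infty}(a)=\inf_{u\in S(a)} J_{V_\infty}(u),
\]
where $J_V(u)=\tfrac12\int(|\nabla u|^2+(V+1)|u|^2)+\int F_1(u)-\int F_2(u)$. Using the logarithmic Sobolev inequality, a scaling argument of Shibata/Jeanjean--Lu type, and the fact that the L\"ogarithmic nonlinearity is $L^2$-subcritical, I expect to show that there exists $a^*>0$ such that for all $a>a^*$ one has $E_{V_0}(a)<0$, a ground state is attained by a positive radial function $w_a\in X$, and the strict inequality $E_{V_0}(a)<E_{V_\infty}(a)$ holds. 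The strict gap is the crucial ingredient that will later exclude loss of compactness at infinity; concentration--compactness together with the decomposition $F_2-F_1$ and the $(\Delta_2)$ property of $F_1$ provides the $X$-precompactness of minimizing sequences, as in Alves--da Silva.

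Next I would define, for $a>a^*$ fixed, the sublevel sets
\[
J_\epsilon^{c}=\{u\in S(a): J_\epsilon(u)\le c\},
\]
and construct the usual pair of maps. Given $y\in M$ and a cut-off of $w_a$ centered at $y/\epsilon$ with the $L^2$-norm rescaled back to $a$, define
\[
\Phi_\epsilon: M\longrightarrow J_\epsilon^{E_{V_0}(a)+h(\epsilon)},\qquad y\mapsto \widetilde w_{\epsilon,y},
\]
and a barycenter map $\beta_\epsilon:J_\epsilon^{E_{V_0}(a)+h(\epsilon)}\to \mathbb{R}^N$ by
\[
\beta_\epsilon(u)=\frac{\int_{\mathbb{R}^N} \chi(\epsilon x)\,|u(x)|^2\,dx}{\int_{\mathbb{R}^N}|u(x)|^2\,dx},
\]
with $\chi$ a suitable truncation of the identity. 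The key lemma, proved by contradiction via concentration--compactness applied to sequences with $J_\epsilon(u_n)\to E_{V_0}(a)$, will state that for every $\delta>0$ there is $\epsilon_0>0$ so that $\beta_\epsilon(u)\in M_\delta$ whenever $u\in J_\epsilon^{E_{V_0}(a)+h(\epsilon)}$ and $\epsilon<\epsilon_0$; from this and a homotopy between $\beta_\epsilon\circ\Phi_\epsilon$ and the inclusion $M\hookrightarrow M_\delta$, standard category theory gives
\[
\operatorname{cat}\bigl(J_\epsilon^{E_{V_0}(a)+h(\epsilon)}\bigr)\ge \operatorname{cat}_{M_\delta}(M).
\]

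Then I would apply the Lusternik--Schnirelmann category theorem on the $C^1$ manifold $S(a)\subset X$ to the functional $J_\epsilon$: Ekeland's principle and the compactness furnished by the previous step (to verify the $(PS)_c$ condition for $c<E_{V_\infty}(a)$) yield at least $\operatorname{cat}_{M_\delta}(M)$ critical points on $S(a)$, each with an associated Lagrange multiplier $\lambda_j\in\mathbb{R}$; from $J_\epsilon(u_j)<0$ together with the Nehari/Poho\v{z}aev-type identity associated with the logarithmic term one deduces $\lambda_j<0$. Finally, concentration is obtained by the classical contradiction argument: if $\xi_\epsilon$ are maxima of $|u_\epsilon|$ and $v_\epsilon(x)=u_\epsilon(x+\xi_\epsilon)$, then $\{v_\epsilon\}$ is a minimizing sequence for $E_{V(\epsilon\xi_\epsilon)}(a)$; if $V(\epsilon\xi_\epsilon)$ did not tend to $V_0$ one would contradict the strict inequality $E_{V_0}(a)<E_V(a)$ for $V>V_0$. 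The main obstacle I expect is the first step: proving the strict subadditivity/gap for the logarithmic nonlinearity together with precompactness of minimizing sequences in the Orlicz component of $X$, since the classical scaling trick $u(\kappa\cdot)$ interacts non-trivially with $\int u^2\log u^2$ and requires a delicate use of the decomposition \eqref{decomposition} and the $(\Delta_2)$-property of $F_1$.
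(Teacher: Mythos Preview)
Your outline is correct and follows essentially the same Benci--Cerami/Cingolani--Lazzo scheme the paper uses: autonomous limit problems, the strict gap $\Upsilon_{0,a}<\Upsilon_{\infty,a}$, the maps $\Phi_\epsilon$ and $\beta_\epsilon$, a $(PS)$ condition below a threshold close to $\Upsilon_{0,a}$, and Lusternik--Schnirelmann category on $S(a)$. One remark on your ``main obstacle'': the subadditivity (Lemma~\ref{subadditive}) is in fact \emph{easier} here than for power nonlinearities, because the paper uses the \emph{multiplicative} scaling $v=\xi u$ rather than the dilational one $u(\kappa\,\cdot)$; since $\log(\xi u)^2=\log\xi^2+\log u^2$, the logarithmic term produces an explicit negative correction $-\tfrac12 a_1^2\xi^2\log\xi^2$ and the strict inequality drops out immediately, with no delicate interaction with the Orlicz norm.
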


The paper is organized as follows: Section 2 provides a concise overview of Orlicz spaces and introduces the variational framework for our problem.  In Section 3, we investigate a crucial limiting problem that plays a significant role in our approach. Finally, Section 4 presents the proof of our results on the multiplicity of solutions, incorporating the Lusternik-Schnirelmann category.

\vspace{0.5 cm}

\noindent \textbf{Notation:} Throughout this paper, unless otherwise specified, we use the following notations:
\begin{itemize}
	\item $B_r(u)$ is an open ball centered at $u$ with radius $r>0$, $B_r=B_r(0)$.

	\item   $C,C_1,C_2,...$ denote any positive constant, whose value is not relevant.
	
	\item  $|\,\,\,|_p$ denotes the usual norm of the Lebesgue space $L^{p}(\mathbb{R}^N)$, for $p \in [1,+\infty]$,
    $\Vert\,\,\,\Vert$ denotes the usual norm of the Sobolev space $H^{1}(\mathbb{R}^N)$.

	\item $o_{n}(1)$ denotes a real sequence with $o_{n}(1)\to 0$ as $n \to +\infty$.

\end{itemize}

\section{The variational framework and some preliminaries}
In this section, we shall present some fundamental concepts and properties concerning Orlicz spaces. For the more details, please refer to \cite{Ada, FIN, Rao}.
\begin{definition} \label{N-function}
An $N$-function is a continuous function $\Phi: \mathbb{R} \rightarrow[0,+\infty)$ that satisfies the following conditions:\\
\noindent (i) $\Phi$ is convex.

\noindent (ii) $\Phi(t)=0 \Leftrightarrow t=0$.

\noindent (iii) $\displaystyle \lim _{t \rightarrow 0} \frac{\Phi(t)}{t}=0$ and $\displaystyle \lim _{t \rightarrow \infty} \frac{\Phi(t)}{t}=+\infty$.

\noindent (iv) $\Phi$ is an even function.
\end{definition}
We say that an N-function $\Phi$ satisfies the $\Delta_2$-condition, denoted by $\Phi \in\left(\Delta_2\right)$, if
$$
\Phi(2 t) \leq k \Phi(t), \quad \forall t \geq t_0,
$$
for some constants $k>0$ and $t_0 \geq 0$.

The conjugate function $\tilde{\Phi}$ associated with $\Phi$ is obtained through the Legendre's transformation, defined as
$$
\tilde{\Phi}=\max _{t \geq 0}\{s t-\Phi(t)\},\,\, \text { for } s \geq 0 .
$$
It can be shown that that $\tilde{\Phi}$ is also an N-function. The functions $\Phi$ and $\tilde{\Phi}$ are mutually complementary, that is, $\tilde{\tilde{\Phi}}=\Phi$.

For an open set $\Omega \subset \mathbb{R}^N$, we define the Orlicz space associated with the N-function $\Phi$ as follows
$$
L^{\Phi}(\Omega)=\left\{u \in L_{loc}^1(\Omega): \int_{\Omega} \Phi\left(\frac{|u|}{\lambda}\right)dx<+\infty, \,\,\text {for some}\,\, \lambda>0\right\} .
$$
The space $L^{\Phi}(\Omega)$ is a Banach space endowed with the Luxemburg norm, given by
$$
\|u\|_{\Phi}=\inf \left\{\lambda>0: \int_{\Omega} \Phi\left(\frac{|u|}{\lambda}\right)dx\leq 1\right\}.
$$
In the Orlicz spaces, we also have H\"{o}lder and Young type inequalities, namely
$$
s t \leq \Phi(t)+\tilde{\Phi}(s), \,\,\text{for}\,\,\forall s, t \geq 0
$$
and
$$
\left|\int_{\Omega} u vdx\right| \leq 2\|u\|_{\Phi}\|v\|_{\tilde{\Phi}}, \,\,\text{for}\,\, \forall u \in L^{\Phi}(\Omega) \,\, \text {and} \,\,\forall v \in L^{\bar{\Phi}}(\Omega).
$$
When $\Phi$, $\tilde{\Phi} \in\left(\Delta_2\right)$, the space $L^{\Phi}(\Omega)$ is reflexive and separable. Moreover, the $\Delta_2$-condition implies that
$$
L^{\Phi}(\Omega)=\left\{u \in L_{loc}^1(\Omega): \int_{\Omega} \Phi(|u|)dx<+\infty\right\}
$$
and
$$
u_n \rightarrow u \,\,\text{in}\,\, L^{\Phi}(\Omega) \Leftrightarrow \int_{\Omega} \Phi\left(\left|u_n-u\right|\right)dx\rightarrow 0 .
$$
We would like to mention an important relation involving $\mathrm{N}$-functions, which will be used later. Let $\Phi$ be a $\mathrm{N}$-function of $C^1$ class and $\tilde{\Phi}$ is its conjugate function. Suppose that
\begin{equation}\label{ine01}
1<l \leq \frac{\Phi^{\prime}(t) t}{\Phi(t)} \leq m<N, \quad t \neq 0,
\end{equation}
then $\Phi, \tilde{\Phi} \in\left(\Delta_2\right)$.

Finally, setting the functions
$$
\xi_0(t)=\min \left\{t^l, t^m\right\} \,\,\text {and} \,\,\xi_1(t)= \max \left\{t^l, t^m\right\}, \quad t \geq 0,
$$
it is well known that under the condition \eqref{ine01}, the function $\Phi$ satisfies the inequality below
\begin{equation}\label{ine02}
\xi_0\left(\|u\|_{\Phi}\right) \leq \int_{\mathbb{R}^N} \Phi(u) \leq \xi_1\left(\|u\|_{\Phi}\right), \quad \forall u \in L^{\Phi}(\Omega) .
\end{equation}

Fixed $\delta>0$ sufficiently small, let us define the functions  $F_1$ and $F_2$ as follows

$$
F_1(s)=
\left\{ \begin{array}{lc}
0, & \; s\leq 0, \\
-\frac{1}{2}s^2\log s^2, & \; 0< \vert s\vert<\delta, \\
-\frac{1}{2}s^2(
\log\delta^2+3)+2\delta \vert s\vert-\frac{1}{2}\delta^2,  & \;  s\geq \delta,
\end{array} \right.
$$

\noindent and
$$
F_2(s)=
\left\{ \begin{array}{lc}
0, & \;  \vert s\vert\leq\delta, \\
\frac{1}{2}s^2\log(s^2/\delta^2)+2\delta s-\frac{3}{2}s^2-\frac{1}{2}\delta^2,  & \;  \vert s\vert\geq \delta,
\end{array} \right.
$$
then
$$
F_2(s)-F_1(s)=\frac{1}{2} s^2 \log s^2, \quad \forall s \in \mathbb{R} .
$$
Moreover, $F_1$ and $F_2$ satisfy the following properties:
\begin{itemize}
\item[\rm ($P_{1}$)] $F_1$ is an even function with $F_1^{\prime}(s) s \geq 0$ and $F_1 \geq 0$. Moreover, $F_1 \in C^1(\mathbb{R}, \mathbb{R})$ is  convex if $\delta \approx 0^{+}$.

\item[\rm ($P_{2}$)] $F_2 \in C^1(\mathbb{R}, \mathbb{R}) \cap C^2((\delta,+\infty), \mathbb{R})$ and for each $p \in\left(2,2^*\right)$, there exists $C_p>0$ such that
\begin{equation}\label{subcritical}
\left|F_2^{\prime}(s)\right| \leq C_p|s|^{p-1}, \quad \forall s \in \mathbb{R} .
\end{equation}

\item[\rm ($P_{3}$)] $s \mapsto \frac{F_2^{\prime}(s)}{s}$ is a nondecreasing function for $s>0$ and a strictly increasing function for $s>\delta$.

\item[\rm ($P_{4}$)] $\displaystyle \lim _{s \rightarrow \infty} \frac{\dot{F}_2^{\prime}(s)}{s}=\infty$.
\end{itemize}

An important result involving the function $F_1$ is as follows, which can be found in \cite{Ad}. However, for the convenience of the reader, we will provide its proof here.

\begin{lemma}\label{N-function}
The function $F_1$ is a N-function. Furthermore, it holds that $F_1, \tilde{F}_1 \in\left(\Delta_2\right)$.
\end{lemma}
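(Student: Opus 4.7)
The plan is to verify the four defining properties of an $N$-function for $F_1$ by direct piecewise inspection, and then to apply the criterion recalled in \eqref{ine01}: if $s\mapsto sF_1'(s)/F_1(s)$ stays between two constants $\ell$ and $m$ with $1<\ell\le m<\infty$, then automatically $F_1,\tilde F_1\in(\Delta_2)$. The $m<N$ requirement in \eqref{ine01} is used only for Sobolev embedding and is irrelevant for the $\Delta_2$-conclusion, so it suffices to establish the sandwich $1<\ell\le sF_1'(s)/F_1(s)\le m<\infty$.

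For the $N$-function properties, I would first match values and one-sided derivatives at the junction $s=\delta$, interpreting $F_1$ as the even extension of the formula given for $s\ge 0$. A short calculation yields $F_1(\delta^\pm)=-\delta^2\log\delta$ and $F_1'(\delta^\pm)=-\delta(2\log\delta+1)$, so $F_1\in C^1(\mathbb{R})$. Evenness, $F_1(0)=0$, and positivity away from $0$ are immediate from the piecewise formula. Convexity on $(0,\delta)$ follows from $F_1''(s)=-2\log s-3>0$ as soon as $\delta<e^{-3/2}$, while on $(\delta,+\infty)$ the function is a quadratic with positive leading coefficient $A:=-\tfrac12(\log\delta^2+3)>0$, hence convex. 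The limit conditions $F_1(t)/t\to 0$ at $0$ and $F_1(t)/t\to\infty$ at $\infty$ follow from $F_1(t)/t=-t\log t$ near zero and from the $At^2$ leading term at infinity.

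For the index bound, on $(0,\delta)$ one computes
$$
\frac{sF_1'(s)}{F_1(s)}=\frac{-2s^2\log s-s^2}{-s^2\log s}=2+\frac{1}{\log s},
$$
which lies in $(1,2)$ for every $s\in(0,\delta)$ as soon as $\delta<e^{-1}$. On $(\delta,+\infty)$, writing $f(s):=sF_1'(s)/F_1(s)=(2As^2+2\delta s)/(As^2+2\delta s-\tfrac12\delta^2)$, the identities
$$
f(s)-2=\frac{\delta^2-2\delta s}{As^2+2\delta s-\tfrac12\delta^2}<0,\qquad f(s)-1=\frac{As^2+\tfrac12\delta^2}{As^2+2\delta s-\tfrac12\delta^2}>0
$$
(both denominators being positive for $s\ge\delta$) show that $1<f(s)<2$ throughout, with limit $2$ at infinity. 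Combining the two pieces and using evenness, the sandwich $1<\ell\le sF_1'(s)/F_1(s)\le m<\infty$ holds for all $s\ne 0$, with $\ell$ coming from the two explicit lower bounds just obtained and $m=2$. The criterion \eqref{ine01} then gives $F_1,\tilde F_1\in(\Delta_2)$. The only delicate point is the strict inequality $\ell>1$, which is precisely what forces the standing smallness assumption on $\delta$; all other steps are algebraic bookkeeping.
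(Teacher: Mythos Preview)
Your proof is correct and follows essentially the same route as the paper: verify the four axioms of an $N$-function by direct piecewise inspection, then establish the sandwich $1<\ell\le sF_1'(s)/F_1(s)\le 2$ on each piece and invoke criterion~\eqref{ine01} to get $F_1,\tilde F_1\in(\Delta_2)$. Your algebra on $[\delta,\infty)$ via the identities for $f(s)-2$ and $f(s)-1$ is a slightly cleaner variant of the paper's manipulation, and your remark that $m<N$ is not needed for the $\Delta_2$-conclusion is well taken; the only point you leave implicit (as does the paper) is the passage from the pointwise bound $f(s)>1$ on $[\delta,\infty)$ to a uniform $\ell>1$, which follows from continuity and the limit $f(s)\to 2$ at infinity.
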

\begin{proof}
	By direct computation, it can be showed that $F_1$ verifies $i)-iv)$ of the Definition \ref{N-function}. To complete the proof,  we need to show that $F_1$ satisfies the relation (\ref{ine01}), which  implies that $F_1, \tilde{F}_1 \in\left(\Delta_2\right)$. First of all, observe that
	$$
	F'_1(s):=\left\{\begin{aligned}
		&-(\log s^2+1)s,\quad &0<s<\delta,\\
		&-s(\log \delta^2 +3) + 2\delta & s\geq \delta.
	\end{aligned}
	\right.
	$$
	Next, we will analyze the cases $0<s<\delta$ and $s\geq \delta$ separately.
	\\
	\textbf{Case 1}: $0<s<\delta$. \\
  In this case,
	\begin{equation*}
		\frac{F'_1(s)s}{F_1(s)}=2+\frac{1}{\log s},
	\end{equation*}
	which implies the existence of $l_1>1$ satisfying
	\begin{equation}\label{1}
		1<l_1\leq\frac{F'_1(s)s}{F_1(s)}\leq m_1:=\sup_{0<s<\delta}\left(2+\frac{1}{\log s}\right)\leq 2,
	\end{equation}
	for $\delta>0$ small enough.
	\\
	\textbf{Case 2}: $s\geq\delta$. \\
In this case,
	\begin{equation*}
		\frac{F'_1(s)s}{F_1(s)}=\frac{-(\log \delta^2 +3)s^2 + 2\delta s}{-\frac{1}{2}(\log \delta^2 +3)s^2 + 2\delta s-\frac{1}{2}\delta^2}.
	\end{equation*}
	From this, we have
	$$\sup_{s\geq \delta}\frac{F'_1(s)s}{F_1(s)}\leq  \sup_{s\geq \delta} \left(\frac{-(\log \delta^2 +3)s^2 + 2\delta s+(2\delta s - \delta^2)}{-\frac{1}{2}(\log \delta^2 +3)s^2 + 2\delta s-\frac{1}{2}\delta^2}\right)\leq 2.$$
	Since
	$$
	\lim_{s \to +\infty}\frac{F'_1(s)s}{F_1(s)}=2 \quad \mbox{and} \quad \frac{F'_1(s)s}{F_1(s)}> 1, \quad \forall s>0,
	$$
	we obtain
	$$
	1<\inf_{s>0}\frac{F'_1(s)s}{F_1(s)}.
	$$
	
	The last inequalities ensure the existence of $l \in (1,2)$ such that
	\begin{equation} \label{2}
		1<l\leq\frac{F'_1(s)s}{F_1(s)}\leq 2,\,\,\,\, \forall s>0.
	\end{equation}
	Since $F_1$ is an even function, the inequalities above hold  for any $s\neq 0$, and thus, the proof is complete.
\end{proof}

As a byproduct of the previous lemma, we have the following corollary:
\begin{corollary} \label{CorF1}   The functional $\Theta:L^{F_1}\left(\mathbb{R}^N\right) \to \mathbb{R}$ given by
$$
\Theta(u)=\int_{\mathbb{R}^N}F_1(u)\,dx
$$
is $C^{1}(L^{F_1}\left(\mathbb{R}^N\right),\mathbb{R})$ with
$$
\Theta'(u)v=\int_{\mathbb{R}^N}F_1'(u)v\,dx, \quad \forall u\,v \in L^{F_1}\left(\mathbb{R}^N\right),
$$
where $L^{F_1}\left(\mathbb{R}^N\right)$ denotes the Orlicz space associated with $F_1$ endowed with the  Luxemburg norm $\|\cdot\|_{F_1}$.
\end{corollary}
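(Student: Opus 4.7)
The plan is to establish, in order, (i) well-definedness of $\Theta$, (ii) that $F_1'(u)\in L^{\tilde F_1}(\mathbb{R}^N)$ whenever $u\in L^{F_1}(\mathbb{R}^N)$, (iii) Gateaux differentiability with the stated formula, and (iv) norm-continuity of the Gateaux derivative, which together yield $\Theta\in C^1$. Since $F_1\in(\Delta_2)$, Lemma 2.1 gives the modular characterization $L^{F_1}(\mathbb{R}^N)=\{u\in L^1_{\mathrm{loc}}:\int F_1(|u|)<\infty\}$, so $\Theta(u)$ is finite on the whole space. For step (ii) I would use the Legendre identity $\tilde F_1(F_1'(t))=tF_1'(t)-F_1(t)$ combined with the pointwise bound $F_1'(t)t\le mF_1(t)$ coming from (2.1) in the lemma (with $m\le 2$), yielding
\begin{equation*}
\tilde F_1(F_1'(u))\le (m-1)F_1(u)\quad\text{a.e. in }\mathbb{R}^N,
\end{equation*}
hence $\int_{\mathbb{R}^N}\tilde F_1(F_1'(u))\,dx<\infty$, so $F_1'(u)\in L^{\tilde F_1}(\mathbb{R}^N)$, and by the Hölder-type inequality in Orlicz spaces the linear map $v\mapsto\int F_1'(u)v\,dx$ is bounded on $L^{F_1}(\mathbb{R}^N)$.

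For step (iii) I fix $u,v\in L^{F_1}(\mathbb{R}^N)$ and, for $|t|\le 1$, write by the mean value theorem
\begin{equation*}
\frac{F_1(u+tv)-F_1(u)}{t}=F_1'(u+\theta_t tv)\,v,\qquad\theta_t\in(0,1),
\end{equation*}
which converges pointwise a.e. to $F_1'(u)v$ as $t\to 0$ since $F_1\in C^1$. For a dominating function I would use monotonicity of $|F_1'|$ on $[0,\infty)$ (which follows from convexity of $F_1$ and $F_1'(0)=0$) to majorize the integrand by $|F_1'(|u|+|v|)|\,|v|$; by the Young/Hölder pair this is integrable because $F_1'(|u|+|v|)\in L^{\tilde F_1}$ (step (ii) applied to $|u|+|v|\in L^{F_1}$ via $\Delta_2$) and $v\in L^{F_1}$. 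Dominated convergence then delivers the Gateaux derivative and the claimed formula.

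Finally, for step (iv), I must show that if $u_n\to u$ in $L^{F_1}(\mathbb{R}^N)$, then $F_1'(u_n)\to F_1'(u)$ in $L^{\tilde F_1}(\mathbb{R}^N)$. Because $F_1,\tilde F_1\in(\Delta_2)$, norm convergence is equivalent to modular convergence, so it suffices to prove $\int\tilde F_1(F_1'(u_n)-F_1'(u))\,dx\to 0$. I would pass to an a.e.-convergent subsequence with an $L^{F_1}$-dominant (by a Brezis-Lieb/Vitali-type argument in Orlicz spaces), use continuity of $F_1'$ for pointwise convergence, and dominate via the bound $\tilde F_1(F_1'(u_n)-F_1'(u))\le C(\tilde F_1(F_1'(u_n))+\tilde F_1(F_1'(u)))\le C'(F_1(u_n)+F_1(u))$ from the $\Delta_2$-condition and the Legendre estimate of step (ii); a Vitali convergence argument then gives the conclusion along the full sequence by the usual subsequence-of-subsequence trick. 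The main obstacle I anticipate is this last continuity step: the logarithmic behavior of $F_1'$ near zero together with the lack of a uniform polynomial bound means the argument must be carried out purely within the $(\Delta_2)$ framework, using modular rather than norm estimates, and the integrable dominant for Vitali has to be extracted from the $L^{F_1}$-convergence itself.
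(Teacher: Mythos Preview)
Your four-step argument is correct and is in fact the standard route to proving $C^1$-regularity of modular functionals on Orlicz spaces under the $(\Delta_2)$ condition; the Legendre identity $\tilde F_1(F_1'(t))=tF_1'(t)-F_1(t)\le (m-1)F_1(t)$ is exactly the right tool for step~(ii), and the subsequence-with-dominant argument for step~(iv) works because $(\Delta_2)$ makes norm and modular convergence equivalent and allows one to extract $g\in L^{F_1}$ with $|u_{n_k}|\le g$ a.e.\ just as in $L^p$.

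However, you should be aware that the paper does \emph{not} actually prove this corollary: it is simply stated as ``an immediate byproduct of the previous lemma'', with the understanding that once Lemma~2.1 provides the inequality $1<l\le F_1'(t)t/F_1(t)\le m$ (hence $F_1,\tilde F_1\in(\Delta_2)$), the $C^1$-regularity of $\Theta$ is a standard fact from Orlicz space theory, available in the references \cite{Ada,FIN,Rao} cited at the start of Section~2. So the comparison is that the paper invokes the result as black-box background, whereas you have reconstructed the full proof; your write-up is more self-contained, but for the purposes of matching the paper, a one-line appeal to the cited references together with the structural inequality (2.1) would suffice.
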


In the sequel, in order to avoid the points $u \in H^1\left(\mathbb{R}^N\right)$ that satisfy $F_1(u) \notin L^1\left(\mathbb{R}^N\right)$, we will restrict the functional $J_{\epsilon}$ given in section 2 to the space $X=H^1\left(\mathbb{R}^N\right) \cap L^{F_1}\left(\mathbb{R}^N\right)$, which will be denoted by $I_{\epsilon}$, that is, $\left.I_{\epsilon} \equiv J_{\epsilon}\right|_{X}$.  Hereafter, let us consider on $X$ the norm
$$
\|\cdot\|:=\|\cdot\|_{H^1}+\|\cdot\|_{F_1},
$$
where $\|\cdot\|_{H^1}$ denotes the usual norm in $H^1\left(\mathbb{R}^N\right)$.

From Lemma \ref{N-function}, it follows that $\left(X,\|\cdot\|\right)$ is a reflexive and separable Banach space. Additionally, note that the embeddings $X \hookrightarrow H^1\left(\mathbb{R}^N\right)$ and $X \hookrightarrow L^{F_1}\left(\mathbb{R}^N\right)$ are continuous.

Consequently, from the conditions on $F_1$ and $V$, it is easy to check that $I_{\epsilon} \in C^1\left(X, \mathbb{R}\right)$ with
$$
I_{\epsilon}^{\prime}(u) v=\int_{\mathbb{R}^N}\Big(\nabla u \nabla v+(V(\epsilon x)+1) u v\Big) dx+\int_{\mathbb{R}^N} F_1^{\prime}(u) vdx-\int_{\mathbb{R}^N} F_2^{\prime}(u) vdx, \quad \forall v \in X .
$$

\section{The limit problem}

In this section, we first  investigate the existence of normalized solution to the problem
\begin{align}\label{12}
 \left\{
\begin{aligned}
&-\Delta u+\mu u=\lambda u+u \log u^2,
\quad
\hbox{in}\,\,\mathbb{R}^N,\\
&\int_{\mathbb{R}^{N}}|u|^{2}dx=a^{2},
\end{aligned}
\right.
\end{align}
where $N \geq 1, a>0, \mu \geq -1$ and $\lambda \in \mathbb{R}$ is an unknown parameter that appears as a Lagrange multiplier.

It is well known that a solution $u$ to problem \eqref{12} corresponds to a critical point of the following $C^1$ functional
$$
I_\mu(u)=\frac{1}{2} \int_{\mathbb{R}^N}\left(|\nabla u|^2+(\mu+1)u^2\right) d x+\int_{\mathbb{R}^N} F_1(u)d x-\int_{\mathbb{R}^N} F_2(u) d x,\,\, u \in X
$$
restricted to the sphere $S(a)$ given by
$$
S(a)=\left\{u \in X:|u|_2=a\right\} .
$$
Our main result in this section is as follows.

\begin{theorem}\label{T2} There exists $a^*(\mu)>0$ such that problem \eqref{12} has a couple $(u, \lambda)$ solution when $a\geq a^*$, where $u$ is positive, radial and $\lambda<0$.
\end{theorem}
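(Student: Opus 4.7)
The plan is to obtain the solution as a constrained minimizer of $I_\mu$ on $S(a)$, to arrange positivity and radial symmetry via Schwarz symmetrization, and to identify $\lambda$ as the associated Lagrange multiplier. Writing $E(a):=\inf_{S(a)} I_\mu$, the key structural feature is the identity $F_2-F_1=\tfrac12 s^2\log s^2$, which produces the \emph{exact} scaling
\begin{equation*}
I_\mu(tu)=t^2 I_\mu(u)-\tfrac{t^2}{2}|u|_2^2\log t^2,\qquad t>0.
\end{equation*}
Since $u\mapsto tu$ is a bijection $S(a)\to S(ta)$, this yields $E(b)=b^2 E(1)-\tfrac{b^2}{2}\log b^2$ for every $b>0$, and a direct computation using the strict convexity of $x\mapsto x\log x$ produces the strict subadditivity
\begin{equation*}
E(a)<E(b)+E\bigl(\sqrt{a^2-b^2}\bigr),\qquad 0<b<a.
\end{equation*}
Using $(P_2)$, the Gagliardo--Nirenberg inequality $|u|_p^p\leq C|\nabla u|_2^{p\alpha}a^{p(1-\alpha)}$ (with $p\alpha<2$ for some $p\in(2,2+4/N)$) and $F_1\geq 0$, I would bound $I_\mu$ from below on $S(a)$, ensuring $E(1)>-\infty$; the scaling formula then forces $E(a)\to -\infty$ as $a\to\infty$, producing $a^*(\mu)>0$ with $E(a)<0$ for $a\geq a^*$.

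Let $(u_n)\subset S(a)$ be a minimizing sequence. Its Schwarz rearrangement preserves $|u_n|_2$ and $\int F_i(u_n)$ (since each $F_i$ is even) and decreases $|\nabla u_n|_2$, so I may assume $u_n\geq 0$ is radial and radially non-increasing. Boundedness of $|\nabla u_n|_2$ follows from the Gagliardo--Nirenberg estimate above, while
\begin{equation*}
\int F_1(u_n)=I_\mu(u_n)-\tfrac12|\nabla u_n|_2^2-\tfrac{\mu+1}{2}a^2+\int F_2(u_n)\leq C,
\end{equation*}
combined with \eqref{ine02}, yields boundedness of $(u_n)$ in $X$. Extracting a subsequence, $u_n\rightharpoonup u$ in $X$ and a.e.\ on $\mathbb{R}^N$, and $u_n\to u$ strongly in $L^p(\mathbb{R}^N)$ for $p\in(2,2^*)$ by the compact embedding $H^1_{rad}\hookrightarrow L^p$; in particular $\int F_2(u_n)\to\int F_2(u)$, while Fatou gives $\int F_1(u)\leq\liminf_n\int F_1(u_n)$ and $|u|_2\leq a$.

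The decisive step is to rule out mass loss. Assume first $|u|_2=b\in(0,a)$. Brezis--Lieb type splittings applied separately to $|\nabla \cdot|_2^2$, $|\cdot|_2^2$, $\int F_1$ and $\int F_2$---each valid through a.e.-convergence, the $(\Delta_2)$-condition of Lemma \ref{N-function}, and the subcritical growth of $F_2$---with $w_n:=u_n-u$ give $|w_n|_2^2\to a^2-b^2$ and $I_\mu(u_n)=I_\mu(u)+I_\mu(w_n)+o(1)$. Consequently
\begin{equation*}
E(a)=\lim_n I_\mu(u_n)=I_\mu(u)+\lim_n I_\mu(w_n)\geq E(b)+E\bigl(\sqrt{a^2-b^2}\bigr),
\end{equation*}
contradicting the strict subadditivity. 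The case $b=0$ is excluded because then $\int F_2(u_n)\to 0$ and $\liminf_n I_\mu(u_n)\geq \tfrac{\mu+1}{2}a^2\geq 0>E(a)$. Therefore $|u|_2=a$, $u_n\to u$ strongly in $X$, and $u$ is a minimizer of $I_\mu$ on $S(a)$.

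By the Lagrange multiplier rule on the $C^1$ manifold $S(a)$, there is $\lambda\in\mathbb{R}$ with $I_\mu'(u)=\lambda u$ in $X^*$. Testing with $u$ and using $(F_2'-F_1')(s)\,s=s^2\log s^2+s^2$,
\begin{equation*}
|\nabla u|_2^2+\mu a^2-\int_{\mathbb{R}^N} u^2\log u^2\,dx=\lambda a^2,
\end{equation*}
while $I_\mu(u)=E(a)<0$ forces $\int u^2\log u^2 > |\nabla u|_2^2+(\mu+1)a^2$; subtracting yields $\lambda<-1<0$. Positivity of $u$ follows from the fact that $u\geq 0$ is a nontrivial weak solution of the logarithmic equation, via standard arguments combining local regularity with the strong maximum principle. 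The main technical obstacle I anticipate is the Brezis--Lieb decomposition of $\int F_1$: since $F_1$ lives in a genuine Orlicz setting rather than an $L^p$ space, the pointwise a.e.-convergence must be combined with the $(\Delta_2)$-condition and uniform integrability to verify that $\int F_1(u_n)-\int F_1(w_n)-\int F_1(u)\to 0$, without which the strict subadditivity cannot be converted into a contradiction.
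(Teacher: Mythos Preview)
Your proof is correct and follows the same overall strategy as the paper---constrained minimization on $S(a)$, a scaling-based subadditivity inequality to exclude dichotomy, Lagrange multipliers, and symmetrization---but it diverges from the paper in two technical choices that are worth comparing. First, you symmetrize the minimizing sequence \emph{a priori} and use the compact embedding of radially nonincreasing $H^1$-functions into $L^p$ to force $\int F_2(u_n)\to\int F_2(u)$ and to rule out vanishing; the paper instead leaves the sequence unrestricted, applies a Lions-type argument to locate a nonvanishing translate (Claim~\ref{Claim 3.3.} and \eqref{inequality25}), and only symmetrizes \emph{after} the minimizer is found. Your route is slightly more elementary but be careful to invoke the monotonicity from Schwarz rearrangement (not mere radiality) when $N=1$, since $H^1_{\mathrm{rad}}(\mathbb{R})\hookrightarrow L^p(\mathbb{R})$ is not compact. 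Second, for the Br\'ezis--Lieb splitting you propose to handle $\int F_1$ and $\int F_2$ separately and correctly flag the Orlicz $(\Delta_2)$-based BL for $F_1$ as the delicate step; the paper bypasses this by proving a single Br\'ezis--Lieb lemma for $\int u^2\log u^2$ directly (Lemma~\ref{Brezis-Lieb}), which, combined with the standard BL for $|\nabla\cdot|_2^2$ and $|\cdot|_2^2$, yields the full splitting $I_\mu(u_n)=I_\mu(u)+I_\mu(w_n)+o(1)$ in one stroke. Finally, your explicit formula $E(b)=b^2E(1)-\tfrac{b^2}{2}\log b^2$ is a clean way to obtain strict subadditivity and the existence of $a^*$; the paper proves the equivalent inequality $\frac{a_1^2}{a_2^2}\mathcal{I}_{\mu,a_2}<\mathcal{I}_{\mu,a_1}$ (Lemma~\ref{subadditive}) via the same scaling identity but without writing the closed form for $E(\cdot)$.
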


The proof of the above theorem will be divided into several lemmas.
\begin{lemma}\label{coercive}
The functional $I_\mu$ is coercive and bounded from below on $S(a)$.
\end{lemma}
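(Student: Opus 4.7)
The plan is to bound $I_\mu$ below on $S(a)$ by $\tfrac{1}{4}|\nabla u|_2^2 + \xi_0(\|u\|_{F_1}) - C$, which clearly yields both boundedness from below and coercivity in the norm $\|\cdot\|=\|\cdot\|_{H^1}+\|\cdot\|_{F_1}$ (because on $S(a)$ one has $\|u\|_{H^1}^2=|\nabla u|_2^2+a^2$, so $\|u\|_{H^1}\to\infty$ forces $|\nabla u|_2\to\infty$, while $l>1$ in Lemma~2.1 makes $\xi_0(t)\to\infty$ as $t\to\infty$).

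First I would handle the only possibly negative contribution, namely $-\int_{\mathbb{R}^N} F_2(u)\,dx$. Using property $(P_2)$, integrating the bound $|F_2'(s)|\leq C_p|s|^{p-1}$ gives $|F_2(s)|\leq \tilde C_p|s|^p$ for every $p\in(2,2^*)$. I will then \emph{choose} $p\in(2,2+\tfrac{4}{N})$, so that the Gagliardo--Nirenberg inequality
\[
|u|_p \leq C\,|\nabla u|_2^{\theta}\,|u|_2^{1-\theta},\qquad \theta=\frac{N(p-2)}{2p},
\]
gives, on $S(a)$,
\[
\int_{\mathbb{R}^N} F_2(u)\,dx \;\leq\; \tilde C_p\,|u|_p^p \;\leq\; K(a)\,|\nabla u|_2^{\,p\theta},
\]
with exponent $p\theta<2$. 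By Young's inequality,
\[
K(a)\,|\nabla u|_2^{\,p\theta} \;\leq\; \tfrac{1}{4}|\nabla u|_2^{2} + C_1(a),
\]
with $C_1(a)$ independent of $u$.

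Next I would use that $\mu+1\geq0$ and $F_1\geq0$ (property $(P_1)$) together with the Orlicz estimate (ine02) applied to the $N$-function $F_1$ (valid by Lemma~2.1):
\[
\int_{\mathbb{R}^N} F_1(u)\,dx \;\geq\; \xi_0(\|u\|_{F_1}),\qquad \xi_0(t)=\min\{t^{l},t^{m}\},
\]
where $1<l\leq m\leq 2$. Putting everything together,
\[
I_\mu(u) \;\geq\; \frac{1}{2}|\nabla u|_2^{2} + \frac{\mu+1}{2}a^{2} + \xi_0(\|u\|_{F_1}) - K(a)\,|\nabla u|_2^{\,p\theta} \;\geq\; \frac{1}{4}|\nabla u|_2^{2} + \xi_0(\|u\|_{F_1}) - C_2(a,\mu),
\]
for every $u\in S(a)$. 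This immediately gives $\inf_{S(a)} I_\mu >-\infty$, and since $l>1$ the right-hand side tends to $+\infty$ as $\|u\|\to\infty$ along $S(a)$, proving coercivity.

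The only delicate point is the choice of $p$: property $(P_2)$ provides the subcritical bound for \emph{any} $p\in(2,2^*)$, and I must exploit this freedom to restrict to $p<2+\tfrac{4}{N}$ so that the Gagliardo--Nirenberg exponent $p\theta$ falls strictly below $2$; without this $L^2$-subcritical range, the quadratic gradient term could not absorb the nonlinearity. The remaining steps are routine and use only the ingredients already set up in Section~2.
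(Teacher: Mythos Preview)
Your proof is correct and follows essentially the same route as the paper: both control $\int F_2(u)\,dx$ via $(P_2)$ with a choice of $p\in(2,2+\tfrac{4}{N})$ and the Gagliardo--Nirenberg inequality so that the resulting exponent $p\theta=p\beta_p<2$ is dominated by the gradient term, and both use the Orlicz estimate \eqref{ine02} for $F_1$ to obtain coercivity in $\|\cdot\|_{F_1}$. Your version is simply a bit more explicit (Young's inequality to absorb the subquadratic term, and the lower bound $\xi_0(\|u\|_{F_1})$ spelled out), but the argument is the same.
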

\begin{proof} First of all, we recalling that by $(P_2)$, for each fixed $p \in (2, 2+\frac{4}{N})$, there exists a constant $C_{p}>0$ such that
\begin{equation*}\label{eq5}
|F'_2(s)|\leq C_{p}|s|^{p-1}, \quad \forall s \in \mathbb{R}.
\end{equation*}
Moreover, by the Gagliardo-Nirenberg inequality, one has
$$
|u|_l^l \leq C|u|_2^{\left(1-\beta_l\right) l}|\nabla u|_2^{\beta_l l}, \quad  \forall u \in H^{1}(\mathbb{R}^N),
$$
for some positive constant $C=C(N, l)>0$, where $\beta_l=N\left(\frac{1}{2}-\frac{1}{l}\right)$, $l \in\left[2, \frac{2 N}{N-2}\right)$ if $N \geq 3$ and $l \geq 2$  if $N =1, 2$. Hence,
\begin{align*}
I_\mu(u)=&\frac{1}{2} \int_{\mathbb{R}^N}\left(|\nabla u|^2+(\mu+1)u^2\right) d x+\int_{\mathbb{R}^N} F_1(u)d x-\int_{\mathbb{R}^N} F_2(u) d x\\
\geq & \frac{1}{2} \int_{\mathbb{R}^N}|\nabla u|^2 d x+\int_{\mathbb{R}^N} F_1(u)d x-C C_1 a^{\left(1-\beta_p\right) p}\left(\int_{\mathbb{R}^N}|\nabla u|^2 d x\right)^{\frac{p\beta_p }{2}}.
\end{align*}
Since $p \in\left(2,2+\frac{4}{N}\right)$, it is clear that $p\beta_p <2$. Moreover, by \eqref{ine02}, we have that $\int_{\mathbb{R}^N} F_1(u)d x \rightarrow +\infty$ as $\Vert u\Vert_{F_{1}}\rightarrow\infty$. These facts ensure the coercivity and boundedness of $I_\mu$ from below on the sphere $S(a)$.
\end{proof}

The above lemma  ensures the well-definedness of the real number
$$
\mathcal{I}_{\mu, a}=\inf _{u \in S(a)} I_\mu(u).
$$
 Next, we will establish some properties of $\mathcal{I}_{\mu, a}$ with respect to the parameter $\mu \geq -1$.
\begin{lemma}\label{negative12}
For each $\mu^* > -1$, there exists $a^*=a^*(\mu^*)>0$ such that $\mathcal{I}_{\mu,a}<0$ for all $a \geq a^*$ and $\mu \in [-1,\mu^*]$.
\end{lemma}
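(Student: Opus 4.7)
The plan is to produce, for each pair $(\mu,a)$ with $\mu\in[-1,\mu^*]$ and $a$ sufficiently large, an explicit element of $S(a)$ on which $I_\mu$ takes a negative value. The leading obstructions to negativity (the gradient term and the $(\mu+1)u^2$ term) scale like $a^2$, whereas the logarithmic term scales like $a^2\log a^2$; so a pure $L^2$-rescaling of any fixed smooth bump should suffice.

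Concretely, I fix once and for all a function $\tilde\phi\in C_c^\infty(\mathbb{R}^N)$ with $\tilde\phi\ge 0$, $\tilde\phi\not\equiv 0$ and $|\tilde\phi|_2=1$, and put $u_a:=a\tilde\phi$. Since $\tilde\phi$ is bounded with compact support and $F_1$ is continuous with $F_1(0)=0$, it is routine to check that $u_a\in X$ and $u_a\in S(a)$. The key computation is to exploit the identity $F_2(s)-F_1(s)=\frac12 s^2\log s^2$ to rewrite
\begin{equation*}
I_\mu(u)=\frac{1}{2}\int_{\mathbb{R}^N}\bigl(|\nabla u|^2+(\mu+1)u^2\bigr)\,dx-\frac{1}{2}\int_{\mathbb{R}^N}u^2\log u^2\,dx,
\end{equation*}
then substitute $u=u_a=a\tilde\phi$, expand $\log(a^2\tilde\phi^2)=\log a^2+\log\tilde\phi^2$, and use $|\tilde\phi|_2=1$. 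The arithmetic yields an identity of the form
\begin{equation*}
I_\mu(u_a)=-a^2\log a+\frac{a^2}{2}\Bigl[|\nabla\tilde\phi|_2^2+(\mu+1)-\int_{\mathbb{R}^N}\tilde\phi^2\log\tilde\phi^2\,dx\Bigr].
\end{equation*}

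For $\mu$ ranging over $[-1,\mu^*]$, the bracket is bounded above by a constant $C=C(\mu^*,\tilde\phi)$ independent of $a$ and of $\mu$, so $I_\mu(u_a)\le -a^2\log a+\tfrac{C}{2}a^2\to-\infty$ as $a\to\infty$. It then suffices to choose $a^*=a^*(\mu^*)>0$ so large that the right-hand side is negative for every $a\ge a^*$; this immediately gives $\mathcal{I}_{\mu,a}\le I_\mu(u_a)<0$ for all such $(\mu,a)$. No real obstacle is expected here: the uniformity in $\mu$ comes for free from the boundedness of $[-1,\mu^*]$, and the only non-arithmetic verification is that the chosen $\tilde\phi$ is a legitimate test function in $X\cap S(a)$, which is immediate from the definitions of $X$ and $F_1$.
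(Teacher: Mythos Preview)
Your proof is correct and follows essentially the same approach as the paper: both fix a smooth compactly supported bump, rescale its amplitude, rewrite $I_\mu$ via the identity $F_2-F_1=\tfrac12 s^2\log s^2$, and observe that the $-a^2\log a$ term dominates uniformly for $\mu\in[-1,\mu^*]$. The only cosmetic difference is that you normalize $|\tilde\phi|_2=1$ from the outset, whereas the paper works with an arbitrary $\varphi$ and rescales by $t=a/|\varphi|_2$.
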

\begin{proof}
Fixing $\varphi \in C_{0}^{\infty}(\mathbb{R}^N) \setminus \{0\}$ and $t>0$, we have
$$
I_\mu(t \varphi)\leq \frac{t^2}{2}\int_{\mathbb{R}^N}\left(|\nabla \varphi|^2+(\mu^*+1)\varphi^2\right) d x-\frac{t^2}{2}\int_{\mathbb{R}^N}\varphi^2 \log \varphi^2 dx-t^2 \log t \int_{\mathbb{R}^N}\varphi^2dx \to -\infty
\,\, \mbox{as} \,\, t \to +\infty.
$$
Thus, there exists $t^*=t^*(\mu)>0$ such that
$$
I_\mu(t \varphi) < -1, \quad \forall t \geq t^*.
$$
Now, let $a^*=t^*|\varphi|_2$, if $a\geq a^*$, from the above arguments, we derive that  $\mathcal{I}_{\mu, a}<0$ for all $\mu \in [-1,\mu^*]$.

\end{proof}

\begin{lemma}\label{subadditive}
Fix $\mu \in[-1, \mu^*]$ and let $0<a_* \leq a_1<a_2$. Then, $\frac{a_1^2}{a_2^2} \mathcal{I}_{\mu, a_2}<\mathcal{I}_{\mu, a_1}<0$.
\end{lemma}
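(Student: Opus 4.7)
The plan is to prove the two strict inequalities separately. The bound $\mathcal{I}_{\mu, a_1} < 0$ is immediate from Lemma \ref{negative12}: taking $a_* \geq a^*(\mu^*)$, every $a_1 \geq a_*$ and every $\mu \in [-1, \mu^*]$ satisfy the hypothesis of that lemma, so $\mathcal{I}_{\mu, a_1} < 0$. The substantive content is the dilation inequality $\frac{a_1^2}{a_2^2} \mathcal{I}_{\mu, a_2} < \mathcal{I}_{\mu, a_1}$, which I would obtain by testing $\mathcal{I}_{\mu, a_2}$ against a rescaled minimizing sequence for $\mathcal{I}_{\mu, a_1}$. The coercivity in Lemma \ref{coercive} makes such minimizing sequences well-behaved, though I only need them as a source of test functions, not as a candidate for an actual minimizer.

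The key observation is a clean scaling identity. Using $F_2(s) - F_1(s) = \tfrac{1}{2} s^2 \log s^2$, I rewrite
\[
I_\mu(u) = \frac{1}{2}\int_{\mathbb{R}^N}\bigl(|\nabla u|^2 + (\mu+1)u^2\bigr)\, dx - \frac{1}{2}\int_{\mathbb{R}^N} u^2 \log u^2\, dx,
\]
noting that $u^2 \log u^2 = 2(F_2(u) - F_1(u)) \in L^1(\mathbb{R}^N)$ whenever $u \in X$, by the subcritical bound \eqref{subcritical} and the definition of the Orlicz space associated with $F_1$. For any $\tau > 0$ and $u \in X$ with $|u|_2 = a$, the identity $\log(\tau u)^2 = \log \tau^2 + \log u^2$ yields the scaling formula
\[
I_\mu(\tau u) = \tau^2 I_\mu(u) - \frac{\tau^2 a^2 \log \tau^2}{2}.
\]
I would then pick $\{u_n\} \subset S(a_1)$ with $I_\mu(u_n) \to \mathcal{I}_{\mu, a_1}$, set $\sigma := a_2/a_1 > 1$, and define $v_n := \sigma u_n \in S(a_2)$. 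Applying the scaling formula with $a = a_1$ and $\tau = \sigma$ gives $I_\mu(v_n) = \sigma^2 I_\mu(u_n) - \tfrac{1}{2} a_2^2 \log \sigma^2$, so from $\mathcal{I}_{\mu, a_2} \leq I_\mu(v_n)$ and the fact that $\sigma > 1$ forces $\log \sigma^2 > 0$, passing to the limit yields
\[
\mathcal{I}_{\mu, a_2} \leq \sigma^2 \mathcal{I}_{\mu, a_1} - \frac{a_2^2 \log \sigma^2}{2} < \sigma^2 \mathcal{I}_{\mu, a_1}.
\]
Dividing by $\sigma^2 = a_2^2/a_1^2$ produces the desired inequality.

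The main obstacle is essentially conceptual rather than technical: one must recognize that the term $-\tfrac{1}{2}a_2^2 \log \sigma^2$ produced by the logarithm under dilation is strictly negative for $\sigma > 1$, and that this is precisely what supplies the strict gap in the subadditivity. In contrast with purely polynomial settings (cf.\ \cite{Sh,JL}), no further scaling argument is needed to secure strictness, and crucially no compactness of the minimizing sequence is required, since only an upper bound on $\mathcal{I}_{\mu, a_2}$ is extracted. The only routine technical point to verify is that $v_n$ genuinely lies in $X$, which is automatic because $X$ is a linear space and $v_n$ is simply a dilation of $u_n$.
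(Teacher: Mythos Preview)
Your proof is correct and follows essentially the same route as the paper: both scale a minimizing sequence for $\mathcal{I}_{\mu,a_1}$ by the factor $\sigma=a_2/a_1>1$, use the logarithmic scaling identity $I_\mu(\sigma u)=\sigma^2 I_\mu(u)-\tfrac12 a_1^2\sigma^2\log\sigma^2$, and extract the strict inequality from $\log\sigma^2>0$. The only cosmetic difference is that the paper takes the minimizing sequence nonnegative (harmless here), while you add the explicit justification that $\mathcal{I}_{\mu,a_1}<0$ via Lemma~\ref{negative12}.
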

\begin{proof} Let $\xi>1$ such that $a_2=\xi a_1$ and $\left(u_n\right) \subset S\left(a_1\right)$ be a nonnegative minimizing sequence with respect to $\mathcal{I}_{\mu, a_1}$, which exists because $I_\mu(u)=I_\mu(|u|)$ for all $u \in X$, that is,
$$
I_\mu\left(u_n\right) \rightarrow \mathcal{I}_{\mu, a_1}, \text { as } n \rightarrow+\infty .
$$
Setting $v_n=\xi u_n$, obviously $v_n \in S\left(a_2\right)$. By a direct computation, one gets
\begin{equation*}
\mathcal{I}_{\mu, a_2} \leq I_\mu\left(v_n\right)=\xi^2 I_\mu\left(u_n\right)-\frac{1}{2}\xi^2 \log \xi^2\int_{\mathbb{R}^N}\vert u_n\vert^2 \,dx=\xi^2 I_\mu\left(u_n\right)-\frac{1}{2}a_1^{2}\xi^2 \log \xi^2.
\end{equation*}
Letting $n \rightarrow+\infty$, it follows from $\xi>1$ that
$$
\mathcal{I}_{\mu, a_2} \leq \xi^2 \mathcal{I}_{\mu, a_1}-\frac{1}{2}a_1^{2}\xi^2 \log \xi^2<\xi^2 \mathcal{I}_{\mu, a_1},
$$
that is,
$$
\frac{a_1^2}{a_2^2} \mathcal{I}_{\mu, a_2}<\mathcal{I}_{\mu, a_1},
$$
which proves the lemma.
\end{proof}
Similar to the proof of  \cite[Lemma~3.1]{Shu}, we have the following lemma.
\begin{lemma}\label{Brezis-Lieb}
Let $(u_n)$ be a bounded sequence in $X$ such that $u_n \rightarrow u$ a.e. in $\mathbb{R}^N$ and $\{u_n^2 \log u_n^2\}$ is a bounded sequence in $L^1\left(\mathbb{R}^N\right)$. Then,
$$
\lim _{n \rightarrow \infty} \int_{\mathbb{R}^N}\left[u_n^2 \log u_n^2-\left|u_n-u\right|^2 \log \left|u_n-u\right|^2\right] \mathrm{d} x=\int_{\mathbb{R}^N} u^2 \log u^2 \mathrm{~d} x.
$$
\end{lemma}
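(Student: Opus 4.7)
The strategy is to split the integrand via the decomposition $\tfrac{1}{2}s^{2}\log s^{2}=F_{2}(s)-F_{1}(s)$ from Section 2, prove a Brezis--Lieb identity separately for $F_{1}$ and $F_{2}$, then add. So the target reduces to showing
\[
\int_{\mathbb{R}^N}\bigl[F_i(u_n)-F_i(u_n-u)\bigr]\,dx \;\longrightarrow\; \int_{\mathbb{R}^N} F_i(u)\,dx \qquad(i=1,2).
\]

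For the $F_{2}$ piece I would argue by the usual Brezis--Lieb scheme. By $(P_{2})$, $|F_{2}'(s)|\le C_{p}|s|^{p-1}$ with $p\in(2,2^{*})$, so the mean value theorem and Young's inequality yield the pointwise splitting
\[
|F_{2}(a+b)-F_{2}(a)-F_{2}(b)| \le \varepsilon |a|^{p} + C_{\varepsilon}|b|^{p},\qquad \forall\,a,b\in\mathbb{R}.
\]
Because $(u_n)$ is bounded in $X\hookrightarrow H^{1}(\mathbb{R}^N)\hookrightarrow L^{p}(\mathbb{R}^N)$ and $u_n\to u$ a.e., the standard dominated-convergence/$\varepsilon$-argument applied with $a=u_n-u$, $b=u$ produces the desired limit.

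For the $F_{1}$ piece the main work is a pointwise splitting inequality in Orlicz form: for every $\varepsilon>0$ there is $C_{\varepsilon}>0$ with
\[
|F_{1}(a+b)-F_{1}(a)-F_{1}(b)| \le \varepsilon F_{1}(a) + C_{\varepsilon} F_{1}(b), \qquad \forall\, a,b\in\mathbb{R}.
\]
I would prove this using (i) the convexity of $F_{1}$ for $\delta$ small from $(P_{1})$, (ii) the $\Delta_{2}$-condition for $F_{1}$ and $\tilde F_{1}$ from Lemma 2.1, and (iii) the ratio bound $F_{1}'(s)s\le 2F_{1}(s)$ recorded in (\ref{2}). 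The mean value theorem gives $F_{1}(a+b)-F_{1}(a)=F_{1}'(\xi)b$ for some $\xi$ between $a$ and $a+b$, and I would split on the cases $|b|\le|a|$ and $|b|>|a|$. The first case is controlled directly by (iii) and $\Delta_{2}$, while the second is closed with the Orlicz Young inequality $ts\le \varepsilon F_{1}(t)+C_{\varepsilon}\tilde F_{1}(s)$ applied to $t=b$, $s=F_{1}'(\xi)$, using $\tilde F_{1}(F_{1}'(\xi))\le K F_{1}(\xi)\le K' F_{1}(a)$ via $\Delta_{2}$. Now set $a=u_n-u$ and $b=u$. The hypothesis that $(u_n)$ is bounded in $X$ combined with (\ref{ine02}) yields $\sup_n\int F_{1}(u_n)\,dx<\infty$; Fatou gives $F_{1}(u)\in L^{1}(\mathbb{R}^N)$; and $\Delta_{2}$ (in the consequence $F_{1}(a-b)\le K(F_{1}(a)+F_{1}(b))$) gives $\sup_n\int F_{1}(u_n-u)\,dx<\infty$. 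Since $F_{1}(u_n)-F_{1}(u_n-u)-F_{1}(u)\to 0$ a.e.\ and the splitting inequality dominates it, modulo $\varepsilon\int F_{1}(u_n-u)\,dx$, by the integrable function $C_{\varepsilon}F_{1}(u)$, dominated convergence followed by $\varepsilon\to 0$ produces the $F_{1}$ limit. Doubling and subtracting the two pieces yields the lemma.

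The main obstacle is the splitting inequality for $F_{1}$. Because $F_{1}$ is piecewise defined, with a logarithmic region for $|s|<\delta$ where $F_{1}'(s)=-(\log s^{2}+1)s$ has $F_{1}'(s)/s\to+\infty$ as $s\to 0^{+}$, and a quadratic-plus-linear region for $|s|\ge\delta$, the estimate has to be checked in several subcases depending on which of $|a|,|b|,|a+b|$ lie in which region. All the bad terms must be absorbed back into $F_{1}$ itself, which is precisely where the $\Delta_{2}$ property and the ratio control (\ref{2}) do the work; once past this technical step, the convergence is a routine application of the general Brezis--Lieb scheme, which also explains the reference to \cite{Shu} for a parallel computation.
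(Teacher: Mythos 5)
Your proposal is correct and follows essentially the same route as the paper, which gives no proof of its own but defers to \cite[Lemma~3.1]{Shu}; that argument likewise splits $\tfrac12 s^2\log s^2=F_2(s)-F_1(s)$ and applies a generalized Brezis--Lieb lemma to each piece, the $F_2$ part via the subcritical bound $(P_2)$ and the $F_1$ part via convexity, the $\Delta_2$-condition and the ratio bound \eqref{2} (which gives $\tilde F_1(F_1'(t))=tF_1'(t)-F_1(t)\le F_1(t)$). Your case split $|b|\le|a|$ versus $|b|>|a|$ is exactly the right way to close the Orlicz splitting inequality, so no gap remains.
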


Our next result is a compactness theorem on $S(a)$, which is a useful tool for studying both the autonomous and non-autonomous cases.

\begin{theorem}\label{compactness} (Compactness theorem on $S(a)$ ) Let $a>a^{*}$ and $\left(u_n\right) \subset S(a)$ be a minimizing sequence with respect to $\mathcal{I}_{\mu, a}$. Then, for some subsequence either\\
\noindent (i) $\left(u_n\right)$ is strongly convergent in $X$,\\
or\\
\noindent (ii) There exists $\left(y_n\right) \subset \mathbb{R}^N$ with $\left|y_n\right| \rightarrow+\infty$ such that the sequence $v_n(x)=u_n\left(x+y_n\right)$ strongly converges to a function $v \in S(a)$ in $X$ with $I_\mu(v)=\mathcal{I}_{\mu, a}$.
\end{theorem}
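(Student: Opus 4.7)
My plan is to run a Lions concentration--compactness argument on the minimizing sequence $(u_n) \subset S(a)$, exploiting the scaling behaviour of the logarithmic nonlinearity. First I would invoke Lemma~\ref{coercive} for boundedness of $(u_n)$ in $X$, hence in $H^1(\mathbb{R}^N)$, and apply Lions' lemma: either $\sup_{y \in \mathbb{R}^N} \int_{B_1(y)}|u_n|^2\,dx \to 0$, or there exist $y_n \in \mathbb{R}^N$ and constants $R,\beta>0$ with $\int_{B_R(y_n)}|u_n|^2\,dx \geq \beta$. The vanishing alternative is ruled out because it would force $u_n \to 0$ in $L^p(\mathbb{R}^N)$ for every $p \in (2,2^*)$, hence $\int F_2(u_n)\,dx \to 0$ by $(P_2)$; combined with $F_1 \geq 0$ and $\mu+1 \geq 0$ this yields $\liminf I_\mu(u_n) \geq 0$, contradicting $\mathcal I_{\mu,a}<0$ from Lemma~\ref{negative12} (since $a \geq a^*$). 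Setting $v_n(x) := u_n(x+y_n)$ and using translation invariance of $I_\mu$, up to a subsequence $v_n \rightharpoonup v$ in $X$ and a.e., with $v \not\equiv 0$ by non-vanishing; put $b := |v|_2 \in (0,a]$.

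Next I would decompose the functional. With $w_n := v_n - v$, standard Brezis--Lieb for $|\nabla\cdot|_2^2$ and $|\cdot|_2^2$, Lemma~\ref{Brezis-Lieb} for the logarithmic piece (whose hypothesis $v_n^2 \log v_n^2$ bounded in $L^1$ follows from $|s^2\log s^2|\leq 2F_1(s)+2F_2(s)$ and boundedness in $X$), and the subcritical Brezis--Lieb for $F_2$ combine to give
\[
I_\mu(v_n) = I_\mu(v) + I_\mu(w_n) + o_n(1), \qquad |w_n|_2^2 \to c^2 := a^2 - b^2.
\]
The heart of the proof is to show $b=a$. Assuming $0<b<a$, so $c>0$, I would apply the scaling of Lemma~\ref{subadditive}: for every $u \in X$ with $0 < |u|_2 < a$, the rescaled $\widetilde u := (a/|u|_2)u \in S(a)$ satisfies
\[
\mathcal I_{\mu,a} \leq I_\mu(\widetilde u) = \frac{a^2}{|u|_2^2}\,I_\mu(u) - \frac{a^2}{2}\,\log\frac{a^2}{|u|_2^2},
\]
which rearranges to $I_\mu(u) \geq \frac{|u|_2^2}{a^2}\mathcal I_{\mu,a} + \frac{|u|_2^2}{2}\log\frac{a^2}{|u|_2^2}$. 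Applying this to $v$ (mass $b$) and to each $w_n$ (masses $\to c$), adding, using $b^2+c^2=a^2$, and passing to the limit in the splitting above,
\[
\mathcal I_{\mu,a} = I_\mu(v) + \lim_n I_\mu(w_n) \geq \mathcal I_{\mu,a} + \tfrac{1}{2}\Big[b^2 \log\tfrac{a^2}{b^2} + c^2 \log\tfrac{a^2}{c^2}\Big],
\]
whose bracket is strictly positive for $b,c \in (0,a)$. Contradiction, so $b = a$.

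From $|v|_2 = a$, $v_n \to v$ strongly in $L^2$, and interpolation with the $H^1$ bound gives $v_n \to v$ in $L^q$ for every $q \in [2,2^*)$, so by $(P_2)$, $\int F_2(v_n)\,dx \to \int F_2(v)\,dx$. Weak lower semicontinuity of $|\nabla\cdot|_2^2$ and, via Fatou and $F_1 \geq 0$, of $u \mapsto \int F_1(u)\,dx$, together with $v \in S(a)$ and $I_\mu(v_n) \to \mathcal I_{\mu,a}$, force $I_\mu(v) = \mathcal I_{\mu,a}$ and equality in each wlsc inequality, yielding $|\nabla v_n|_2 \to |\nabla v|_2$ and $\int F_1(v_n)\,dx \to \int F_1(v)\,dx$. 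The former gives strong $H^1$ convergence; the latter, combined with the Brezis--Lieb splitting for $F_1$ (deducible from Lemma~\ref{Brezis-Lieb} and subcritical Brezis--Lieb for $F_2$) and the $\Delta_2$-property of $F_1$ from Lemma~\ref{N-function}, yields $v_n \to v$ in $L^{F_1}(\mathbb{R}^N)$. Hence $v_n \to v$ in $X$. Finally, if $(y_n)$ is bounded, extracting $y_n \to y_0$ and translating back produces $u_n \to v(\cdot - y_0)$ in $X$, which is alternative (i); otherwise $|y_n| \to \infty$ up to subsequence, which is alternative (ii).

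The main obstacle is the strict subadditivity step that forces $b = a$. Lemma~\ref{subadditive} alone is insufficient because it requires both masses in the splitting to lie above the threshold $a^*$, which cannot be guaranteed a priori. The workaround, and the essentially logarithmic feature of the problem, is the one-shot rescaling inequality above, which converts the specific form of the nonlinearity into the strictly positive gain $\tfrac{1}{2}[b^2 \log(a^2/b^2) + c^2 \log(a^2/c^2)]$ valid for every nontrivial mass splitting $b,c \in (0,a)$; this renders the concentration--compactness machinery applicable regardless of how the mass divides.
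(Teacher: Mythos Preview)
Your proposal is correct and follows essentially the same route as the paper: Lions' lemma to rule out vanishing, a translation to secure a nontrivial weak limit, the logarithmic scaling identity (the content of Lemma~\ref{subadditive}'s proof) to exclude dichotomy, and then weak lower semicontinuity plus the $\Delta_2$-property of $F_1$ to upgrade to strong convergence in $X$. The only difference is organizational---you apply Lions' lemma at the outset and split into bounded/unbounded $(y_n)$ at the end, whereas the paper first extracts the weak limit and invokes Lions only in the case $u=0$---and your explicit rescaling inequality with the strictly positive gain $\tfrac12\big[b^2\log(a^2/b^2)+c^2\log(a^2/c^2)\big]$ sidesteps the restriction $a_1\geq a_*$ in the \emph{statement} of Lemma~\ref{subadditive} (a restriction the paper itself tacitly ignores, since its proof of that lemma yields the needed inequality for all $0<a_1<a_2$).
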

\begin{proof}
Since $\mathcal{I}_\mu$ is coercive on $S(a)$, the sequence $\left(u_n\right)$ is bounded in $X$, and so, $u_n \rightharpoonup u$ in $X$ for some subsequence. If $u \neq 0$ and $|u|_2=b \neq a$, we must have $b \in(0, a)$, and by the Br\'{e}zis-Lieb Lemma (see \cite{Willem}),
$$
\left|u_n\right|_2^2=\left|u_n-u\right|_2^2+|u|_2^2+o_n(1).
$$
Furthermore, by Lemma \ref{Brezis-Lieb}, one has

$$
\int_{\mathbb{R}^N}u_n^2 \log u_n^2 dx=\int_{\mathbb{R}^N}\vert u_n-u\vert^2 \log \vert u_n-u\vert^2 dx+\int_{\mathbb{R}^N} u^2 \log u^2 d x+o_n(1).
$$
Setting $v_n=u_n-u, d_n=\left|v_n\right|_2$ and supposing that $d_n \rightarrow d$, we infer that $a^2=b^2+d^2$ and $d_n \in(0, a)$ for $n\in \mathbb{N}$ large enough. Hence,
$$
\mathcal{I}_{\mu, a}+o_n(1)=I_\mu\left(u_n\right)=I_\mu\left(v_n\right)+I_\mu(u)+o_n(1) \geq \mathcal{I}_{\mu, d_n}+\mathcal{I}_{\mu, b}+o_n(1),
$$
and by Lemma \ref{subadditive},
$$
\mathcal{I}_{\mu, a}+o_n(1) \geq \frac{d_n^2}{a^2} \mathcal{I}_{\mu, a}+\mathcal{I}_{\mu, b}+o_n(1) .
$$
Letting $n \rightarrow+\infty$, one finds
\begin{equation}\label{inequality21}
\mathcal{I}_{\mu, a} \geq \frac{d^2}{a^2} \mathcal{I}_{\mu, a}+\mathcal{I}_{\mu, b} .
\end{equation}
Since $b \in(0, a)$, employing again Lemma \ref{subadditive} in \eqref{inequality21}, we arrive at the following inequality
$$
\mathcal{I}_{\mu, a}>\frac{d^2}{a^2} \mathcal{I}_{\mu, a}+\frac{b^2}{a^2} \mathcal{I}_{\mu, a}=\left(\frac{d^2}{a^2}+\frac{b^2}{a^2}\right) \mathcal{I}_{\mu, a}=\mathcal{I}_{\mu, a},
$$
which is absurd. This asserts that $|u|_2=a$, or equivalently, $u \in S(a)$.
As $\left|u_n\right|_2=|u|_2=a, u_n \rightharpoonup u$ in $L^2\left(\mathbb{R}^N\right)$ and $L^2\left(\mathbb{R}^N\right)$ is reflexive, we must have that
\begin{equation}\label{inequality22}
u_n \rightarrow u \quad \text { in } L^2\left(\mathbb{R}^N\right).
\end{equation}
This combined with interpolation theorem in the Lebesgue spaces and \eqref{subcritical} leads to
\begin{equation}\label{inequality23}
\int_{\mathbb{R}^N} F_{2}\left(u_n\right) d x \rightarrow \int_{\mathbb{R}^N} F_{2}(u) d x.
\end{equation}
These limits together with the limit $\mathcal{I}_{\mu, a}=\displaystyle \lim _{n \rightarrow+\infty} I_\mu\left(u_n\right)$ and the fact that $F_1 \geq 0$ provide
$$
\mathcal{I}_{\mu, a} \geq I_\mu(u) \text {. }
$$
As $u \in S(a)$, we infer that $I_\mu(u)=\mathcal{I}_{\mu, a}$, then
$$
\lim _{n \rightarrow+\infty} I_\mu\left(u_n\right)=I_\mu(u),
$$
and so,
$$
\int_{\mathbb{R}^N}|\nabla u_n|^2\,dx \to \int_{\mathbb{R}^N}|\nabla u|^2\,dx
$$
and
$$
\int_{\mathbb{R}^N}F_1(u_n)\,dx \to \int_{\mathbb{R}^N}F_1(u)\,dx.
$$
Since $F_1\in (\Delta_2)$, \eqref{inequality22} combines with the last two limits permit to  conclude that $u_n \rightarrow u$ in $X$.\\

Now, let us assume that $u=0$, that is, $u_n \rightharpoonup 0$ in $X$. \\

\begin{claim} \label{Claim 3.3.} There exists a positive constant $C>0$ such that
\begin{equation}\label{inequality24}
\int_{\mathbb{R}^N} F_{2}\left(u_n\right) d x \geq C, \quad \text { for } n \in \mathbb{N} \text { large. }
\end{equation}
\end{claim}
Otherwise, if \eqref{inequality24} is not true, then there exists a subsequence of $\left(u_n\right)$, still denoted by itself, such that
$$
\int_{\mathbb{R}^N} F_2\left(u_n\right) d x \rightarrow 0, \text { as } n \rightarrow+\infty.
$$
Now, recalling that
$$
0>\mathcal{I}_{\mu, a}=I_\mu\left(u_n\right) +o_n(1)\geq-\int_{\mathbb{R}^N} F_2\left(u_n\right) d x +o_n(1), \quad n \in \mathbb{N}\,\,\text{large},
$$
we arrive at a contradiction, and Claim \ref{Claim 3.3.} is proved. Thanks to Claim \ref{Claim 3.3.},  there are $R, \beta>0$ and $y_n \in \mathbb{R}^N$ such that
\begin{equation}\label{inequality25}
\int_{B_R\left(y_n\right)}\left|u_n\right|^2 d x \geq \beta, \text { for all } n \in \mathbb{N},
\end{equation}
otherwise we would  have $u_n \rightarrow 0$ in $L^t\left(\mathbb{R}^N\right)$ for all $t \in\left(2,2^*\right)$ that implies $F_{2}\left(u_n\right) \rightarrow 0$ in $L^1\left(\mathbb{R}^N\right)$, which contradicts \eqref{inequality24}. Since $u=0$, the inequality \eqref{inequality25} together with the Sobolev embedding implies that $\left(y_n\right)$ is unbounded. From this, considering $\tilde{u}_n(x)=u\left(x+y_n\right)$, clearly $\left(\tilde{u}_n\right) \subset S(a)$ and it is also a minimizing sequence with respect to $\mathcal{I}_{\mu, a}$. Moreover, there exists $v \in X \backslash\{0\}$ such that
$$
\tilde{u}_n \rightharpoonup v \text { in } X \text { and } \tilde{u}_n(x) \rightarrow v(x) \text { a.e. in } \mathbb{R}^N \text {. }
$$
Following as in the first part of the proof, we derive that $\tilde{u}_n \rightarrow v$ in $X$ which concludes the proof.

\end{proof}

\subsection{Proof of Theorem \ref{T2}}
 By Lemma \ref{coercive}, there exists a bounded minimizing sequence $\left(u_n\right) \subset S(a)$ with respect to $\mathcal{I}_{\mu, a}$, such that $I_\mu\left(u_n\right) \rightarrow \mathcal{I}_{\mu, a}$. Now, applying Theorem \ref{compactness}, there exists $u \in S(a)$ with $I_\mu(u)=\mathcal{I}_{\mu, a}$. Therefore, by the Lagrange multiplier, there exists $\lambda_a \in \mathbb{R}$ such that
\begin{equation}\label{inequality26}
I_\mu^{\prime}(u)=\lambda_a \Psi^{\prime}(u), \,\,\text { in } \,\, X^{\prime}
\end{equation}
where $\Psi:  X \rightarrow \mathbb{R}$ is defined as
$$
\Psi(u)=\frac{1}{2}\int_{\mathbb{R}^N}|u|^2 d x, \quad u \in  X.
$$
Thus, according to \eqref{inequality26}, we have
$$
-\Delta u+\mu u=\lambda_a u+u \log u^2, \quad \text { in } \mathbb{R}^N \text {. }
$$
Moreover, since $I_\mu(u)=\mathcal{I}_{\mu, a}<0$, it follows that $\lambda_a<0$. In fact, the equality above implies
$$
\int_{\mathbb{R}^N}(|\nabla u|^2+\mu|u|^2)\,dx=\lambda_a a^2+\int_{\mathbb{R}^N} u^2 \log u^2\,dx.
$$
Using the definition of $F_1$ and $F_2$, we know that
$$
F_2(s)-F_1(s)=\frac{1}{2}s^2 \log s^2, \quad \forall s \in \mathbb{R},
$$
then,
$$
\frac{1}{2}\int_{\mathbb{R}^N}(|\nabla u|^2+\mu|u|^2)\,dx+\int_{\mathbb{R}^N}F_1(u)\,dx-\int_{\mathbb{R}^N}F_2(u)\,dx=\frac{\lambda_a a^2}{2}.
$$
This implies
$$
\mathcal{I}_{\mu, a}=I_\mu(u) \geq \frac{\lambda_a a^2}{2},
$$
which shows that $\lambda_a<0$.

Now, we aim to prove that $u$ can be chosen as a positive function. From the definition of the functional $I_\mu$, it is easy to check that $I_\mu(|u|)=I_\mu(u)$. Moreover, since $u \in S(a)$ implies that $\vert u\vert \in S(a)$, we have
$$
\mathcal{I}_{\mu, a}=I_\mu(u)=I_\mu(|u|) \geq \mathcal{I}_{\mu, a},
$$
which shows that $I_\mu(|u|)=\mathcal{I}_{\mu, a}$, and so, we can replace $u$ by $|u|$. Furthermore, if $u^*$ denotes the Schwarz's symmetrization of $u$, we have
$$
\int_{\mathbb{R}^N}|\nabla u|^2 d x \geq \int_{\mathbb{R}^N}\left|\nabla u^*\right|^2 d x, \quad \int_{\mathbb{R}^N}|u|^2 d x=\int_{\mathbb{R}^N}\left|u^*\right|^2 d x,
$$
and
$$
\int_{\mathbb{R}^N} F_{1}(u) d x=\int_{\mathbb{R}^N} F_{1}\left(u^*\right) d x,\quad \int_{\mathbb{R}^N} F_{2}(u) d x=\int_{\mathbb{R}^N} F_{2}\left(u^*\right) d x,
$$
then $u^* \in S(a)$ and $I_\mu\left(u^*\right)=\mathcal{I}_{\mu, a}$, from where it follows that we can replace $u$ by $u^*$. Thus, we may show that $u$ is radial. Moreover, by an analogous reasoning as used in the proof of \cite[Theorem 3.1]{AJ1} and \cite[Section 3.1]{DMS}, using a suitable version of maximum principle \cite[Theorem 1]{V}, we deduce that $u$ is positive in $\mathbb{R}^N$.

An immediate consequence of Theorem \ref{T2} is the following corollary.

\begin{corollary}\label{coro1}
Fix $a>a^{*}>0$ and let $-1\leq \mu_1<\mu_2 \leq \mu^*$. Then, $\mathcal{I}_{\mu_1, a}<\mathcal{I}_{\mu_2, a}<0$.
\end{corollary}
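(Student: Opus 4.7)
The plan is to exploit the fact that on the fixed sphere $S(a)$ the two functionals $I_{\mu_1}$ and $I_{\mu_2}$ differ only by a constant depending on $a$, and then to use the existence of a minimizer guaranteed by Theorem \ref{T2} to turn a weak inequality into a strict one.

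First, I would note that for every $u \in S(a)$ one has the identity
$$
I_{\mu_2}(u) - I_{\mu_1}(u) = \frac{1}{2}(\mu_2 - \mu_1)\int_{\mathbb{R}^N}|u|^2\,dx = \frac{1}{2}(\mu_2 - \mu_1)a^2,
$$
since the gradient term, $F_1$-term and $F_2$-term do not depend on $\mu$. In particular $I_{\mu_2}(u) = I_{\mu_1}(u) + \tfrac{1}{2}(\mu_2-\mu_1)a^2$ on $S(a)$.

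Next, since $a > a^* = a^*(\mu^*)$ and $\mu_2 \in [-1,\mu^*]$, Theorem \ref{T2} yields a minimizer $u_2 \in S(a)$ with $I_{\mu_2}(u_2) = \mathcal{I}_{\mu_2,a}$. Using the identity above with $u = u_2$ and the definition of $\mathcal{I}_{\mu_1,a}$ as an infimum gives
$$
\mathcal{I}_{\mu_1,a} \leq I_{\mu_1}(u_2) = I_{\mu_2}(u_2) - \frac{1}{2}(\mu_2-\mu_1)a^2 = \mathcal{I}_{\mu_2,a} - \frac{1}{2}(\mu_2-\mu_1)a^2 < \mathcal{I}_{\mu_2,a},
$$
because $\mu_2 - \mu_1 > 0$ and $a > 0$. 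This establishes the strict inequality $\mathcal{I}_{\mu_1,a} < \mathcal{I}_{\mu_2,a}$.

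Finally, for the upper bound $\mathcal{I}_{\mu_2,a} < 0$, I would invoke Lemma \ref{negative12} directly: since $\mu_2 \in [-1, \mu^*]$ and $a \geq a^* = a^*(\mu^*)$, that lemma gives $\mathcal{I}_{\mu_2,a} < 0$. No essential obstacle arises; the content is just the observation that $\mu$ enters $I_\mu$ as an additive mass term and that a genuine minimizer (not just a minimizing sequence) exists, so the comparison is strict rather than merely weak.
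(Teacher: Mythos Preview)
Your proof is correct and follows essentially the same approach as the paper: take a minimizer $u_{\mu_2,a}\in S(a)$ for $I_{\mu_2}$ (furnished by Theorem~\ref{T2}) and use $I_{\mu_1}(u)<I_{\mu_2}(u)$ on $S(a)$ to obtain $\mathcal{I}_{\mu_1,a}\leq I_{\mu_1}(u_{\mu_2,a})<I_{\mu_2}(u_{\mu_2,a})=\mathcal{I}_{\mu_2,a}$. Your version is slightly more explicit---you compute the exact gap $\tfrac{1}{2}(\mu_2-\mu_1)a^2$ and invoke Lemma~\ref{negative12} for the bound $\mathcal{I}_{\mu_2,a}<0$---but the argument is the same.
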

\begin{proof}
 Let $u_{\mu_2, a} \in S(a)$ satisfying $I_{\mu_2}\left(u_{\mu_2, a}\right)=\mathcal{I}_{\mu_2, a}$. Then,
$$
\mathcal{I}_{\mu_1, a} \leq I_{\mu_1}\left(u_{\mu_2, a}\right)<I_{\mu_2}\left(u_{\mu_2, a}\right)=\mathcal{I}_{\mu_2, a}.
$$

\section{The proof of Theorem \ref{T1}}
In this section, we will establish some properties of the functional $I_{\epsilon}: X \rightarrow \mathbb{R}$ defined as
$$
I_{\epsilon}(u)=\frac{1}{2} \int_{\mathbb{R}^N}\Big(|\nabla u|^2+(V(\epsilon x)+1)|u|^2\Big) d x+\int_{\mathbb{R}^N} F_1(u)d x-\int_{\mathbb{R}^N} F_2(u)d x,
$$
constrained to the sphere $S(a)$.\\

In what follows, we will denote by $J_0, J_{\infty}: X \rightarrow \mathbb{R}$ the following functionals
$$
I_0(u)=\frac{1}{2} \int_{\mathbb{R}^N}\Big(|\nabla u|^2+(V_0+1)|u|^2\Big) d x+\int_{\mathbb{R}^N} F_1(u)d x-\int_{\mathbb{R}^N} F_2(u)d x
$$
and
$$
I_{\infty}(u)=\frac{1}{2} \int_{\mathbb{R}^N}\Big(|\nabla u|^2+(V_{\infty}+1)|u|^2\Big) d x+\int_{\mathbb{R}^N} F_1(u)d x-\int_{\mathbb{R}^N} F_2(u)d x.
$$
Moreover, let us denote by $\Upsilon_{\epsilon, a}, \Upsilon_{0, a}$ and $\Upsilon_{\infty, a}$ the following real numbers
$$
\Upsilon_{\epsilon, a}=\inf _{u \in S(a)}I_\epsilon(u), \quad \Upsilon_{0, a}=\inf _{u \in S(a)}I_0(u) \quad \text {and } \quad  \Upsilon_{\infty, a}=\inf _{u \in S(a)}I_{\infty}(u).
$$
Because $-1<V_{\infty}<+\infty$, fixing $\mu^*=\displaystyle \sup_{x \in \mathbb{R}^N} V(x)>-1$ in Lemma \ref{negative12},  Corollary \ref{coro1} asserts the inequality
\begin{equation}\label{comparison}
\Upsilon_{0, a}<\Upsilon_{\infty, a}<0, \,\, \text {if}\,\, a>a^{*}>0.
\end{equation}
\end{proof}
Next, we fix $0<\rho_1=\frac{1}{2}\left(\Upsilon_{\infty, a}-\Upsilon_{0, a}\right)$.

Our first lemma in this section establishes several crucial relationships among the levels $\Upsilon_{\epsilon, a}$, $\Upsilon_{\infty, a}$, and $\Upsilon_{0, a}$.
\begin{lemma}\label{relation}
 For any $a>a^{*}>0$, $\underset{\epsilon\rightarrow 0}{\limsup}\Upsilon_{\epsilon, a} \leq \Upsilon_{0, a}$ and there exists $\epsilon_0>0$ such that $\Upsilon_{\epsilon, a}<\Upsilon_{\infty, a}$  for all $\epsilon \in\left(0, \epsilon_0\right)$.
\end{lemma}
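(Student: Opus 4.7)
My plan is to test the functional $I_\epsilon$ against a radial minimizer of $I_0$ produced by Theorem~\ref{T2}, and then control the error by dominated convergence.

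Applying Theorem~\ref{T2} with $\mu = V_0 \geq -1$ (valid since $a > a^{*}$) yields a positive radial function $w \in S(a)$ with $I_0(w) = \Upsilon_{0,a}$. In particular $w \in X \subset H^1(\mathbb{R}^N) \cap L^2(\mathbb{R}^N)$. The point is that the $\epsilon$-dependence of $I_\epsilon$ enters only through the potential term, so
$$
I_\epsilon(w) - I_0(w) = \frac{1}{2} \int_{\mathbb{R}^N} \bigl( V(\epsilon x) - V_0 \bigr) |w|^2 \, dx.
$$
Since $V(0) = V_0$ and $V$ is continuous, $V(\epsilon x) - V_0 \to 0$ pointwise as $\epsilon \to 0$; since $V \in L^\infty(\mathbb{R}^N)$, the integrand is dominated by $2\|V\|_\infty |w|^2 \in L^1(\mathbb{R}^N)$. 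Dominated convergence gives $I_\epsilon(w) \to I_0(w) = \Upsilon_{0,a}$ as $\epsilon \to 0$. Because $w \in S(a)$, one has $\Upsilon_{\epsilon,a} \leq I_\epsilon(w)$, so
$$
\limsup_{\epsilon \to 0} \Upsilon_{\epsilon,a} \leq \lim_{\epsilon \to 0} I_\epsilon(w) = \Upsilon_{0,a},
$$
which is the first claim.

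For the second claim, recall the strict comparison \eqref{comparison}: $\Upsilon_{0,a} < \Upsilon_{\infty,a}$. With $\rho_1 = \tfrac{1}{2}\bigl(\Upsilon_{\infty,a} - \Upsilon_{0,a}\bigr) > 0$, one has $\Upsilon_{0,a} + \rho_1 = \tfrac{1}{2}(\Upsilon_{0,a} + \Upsilon_{\infty,a}) < \Upsilon_{\infty,a}$. Combining this with the $\limsup$ bound above, there exists $\epsilon_0 > 0$ such that
$$
\Upsilon_{\epsilon,a} \leq \Upsilon_{0,a} + \rho_1 < \Upsilon_{\infty,a}, \quad \forall \, \epsilon \in (0,\epsilon_0),
$$
which concludes the argument.

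No real obstacle is anticipated here: both halves follow essentially from comparing $I_\epsilon$ and $I_0$ on a fixed test function. The only care required is verifying the dominated-convergence hypothesis, which is immediate from $V \in L^\infty$ and $w \in L^2$; all $\epsilon$-independent nonlinear terms $F_1(w), F_2(w)$ cancel, so no subtle Orlicz estimate is needed in this lemma.
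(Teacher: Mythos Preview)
Your proposal is correct and follows essentially the same approach as the paper: test $I_\epsilon$ against a minimizer $u_0 \in S(a)$ of $I_0$ furnished by Theorem~\ref{T2}, pass to the limit $\epsilon \to 0$ to obtain the $\limsup$ bound, and then combine with the strict inequality \eqref{comparison} to deduce $\Upsilon_{\epsilon,a} < \Upsilon_{\infty,a}$ for small $\epsilon$. Your version is in fact slightly more detailed, spelling out the dominated-convergence justification for $I_\epsilon(w) \to I_0(w)$ and making the role of $\rho_1$ in the second claim explicit, whereas the paper leaves these steps implicit.
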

\begin{proof}For any $a>a^{*}>0$, let $u_0 \in S(a)$ with $I_0\left(u_0\right)=\Upsilon_{0, a}$. Then,
$$
\Upsilon_{\epsilon, a} \leq I_\epsilon\left(u_0\right)=\frac{1}{2} \int_{\mathbb{R}^N}\left(\left|\nabla u_0\right|^2+(V(\epsilon x)+1)\left|u_0\right|^2\right) d x+\int_{\mathbb{R}^N} F_1(u_0)d x-\int_{\mathbb{R}^N} F_2(u_0)d x.
$$
Taking the limit as $\epsilon \rightarrow 0^{+}$, we find
$$
\limsup _{\epsilon \rightarrow 0^{+}} \Upsilon_{\epsilon, a} \leq \lim _{\epsilon \rightarrow 0^{+}} I_\epsilon\left(u_0\right)=I_0\left(u_0\right)=\Upsilon_{0,a}.
$$
The inequality $\Upsilon_{\epsilon, a}<\Upsilon_{\infty, a}$ for $\epsilon>0$ small enough follows from \eqref{comparison} together with the last inequality.
\end{proof}
\begin{lemma}\label{nontrivial}
 For any $a>a^{*}>0$, fix $\epsilon \in\left(0, \epsilon_0\right)$ and let $\left(u_n\right) \subset S(a)$ be such that $I_\epsilon\left(u_n\right) \rightarrow c$ with $c<\Upsilon_{0, a}+\rho_1<0$. If $u_n \rightharpoonup u$ in $X$, then $u \neq 0$.
\end{lemma}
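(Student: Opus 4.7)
The plan is to argue by contradiction: assume $u_n \rightharpoonup 0$ in $X$ and derive that $c \geq \Upsilon_{\infty,a}$, which contradicts the hypothesis $c < \Upsilon_{0,a} + \rho_1 = \tfrac{1}{2}(\Upsilon_{0,a} + \Upsilon_{\infty,a}) < \Upsilon_{\infty,a}$. The key observation is that $I_\epsilon(u_n) - I_\infty(u_n) = \tfrac{1}{2}\int_{\mathbb{R}^N}(V(\epsilon x) - V_\infty)|u_n|^2\,dx$, since the $F_1$ and $F_2$ terms cancel. So the whole argument reduces to controlling this single integral involving the potential.

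Since $X \hookrightarrow H^1(\mathbb{R}^N)$ continuously, $u_n \rightharpoonup 0$ in $X$ implies $u_n \rightharpoonup 0$ in $H^1(\mathbb{R}^N)$, and by Rellich--Kondrachov, $u_n \to 0$ in $L^2_{\mathrm{loc}}(\mathbb{R}^N)$. Fix $\eta > 0$. By $(V_1)$ there exists $R = R(\eta) > 0$ such that $V(y) \geq V_\infty - \eta$ whenever $|y| \geq R$; equivalently, $V(\epsilon x) \geq V_\infty - \eta$ for $|x| \geq R/\epsilon$. I would then split
\[
\int_{\mathbb{R}^N}(V(\epsilon x) - V_\infty)|u_n|^2\,dx = \int_{|x|\leq R/\epsilon} + \int_{|x|>R/\epsilon}.
\]
The first integral is $o_n(1)$ because $V \in L^\infty$ and $u_n \to 0$ in $L^2(B_{R/\epsilon})$; the second is bounded below by $-\eta\, a^2$ since $u_n \in S(a)$. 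Consequently
\[
I_\infty(u_n) \leq I_\epsilon(u_n) + \tfrac{1}{2}\eta a^2 + o_n(1).
\]

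Since $u_n \in S(a)$ gives $I_\infty(u_n) \geq \Upsilon_{\infty,a}$, passing to the limit $n \to \infty$ yields $\Upsilon_{\infty,a} \leq c + \tfrac{1}{2}\eta a^2$, and letting $\eta \to 0^+$ gives $\Upsilon_{\infty,a} \leq c$. But by hypothesis $c < \Upsilon_{0,a} + \rho_1 < \Upsilon_{\infty,a}$, which is a contradiction. Hence $u \neq 0$.

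The main (mild) obstacle is verifying that the potential contribution on $B_{R/\epsilon}$ really is negligible: this requires that weak convergence in $X$ passes to $L^2_{\mathrm{loc}}$ strong convergence, which is handled by the continuous embedding $X \hookrightarrow H^1(\mathbb{R}^N)$ and Rellich--Kondrachov. Everything else is just a careful bookkeeping of the comparison $I_\epsilon$ vs.\ $I_\infty$, which is clean because the nonlinear parts $F_1, F_2$ do not depend on $\epsilon$ and cancel exactly.
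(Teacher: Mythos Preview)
Your proof is correct and follows essentially the same approach as the paper's own argument: both argue by contradiction, compare $I_\epsilon$ with $I_\infty$ via the identity $I_\epsilon(u_n) - I_\infty(u_n) = \tfrac{1}{2}\int (V(\epsilon x)-V_\infty)|u_n|^2\,dx$, split the integral at radius $R/\epsilon$ using $(V_1)$, kill the inner piece by local $L^2$ compactness, and bound the outer piece by $-\eta a^2$ to reach the contradiction $\Upsilon_{\infty,a} \leq \Upsilon_{0,a}+\rho_1$. Your write-up is in fact slightly cleaner in its bookkeeping than the paper's version.
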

\begin{proof}
Assume, by contradiction, that $u=0$. Then,
$$
\Upsilon_{0, a}+\rho_1+o_n(1)>I_\epsilon\left(u_n\right)=I_{\infty}\left(u_n\right)+\frac{1}{2} \int_{\mathbb{R}^N}\left(V(\epsilon x)-V_{\infty}\right)\left|u_n\right|^2 d x .
$$
Using $\left(V_1\right)$, for any given $\zeta>0$, there exists $R>0$ such that
$$
V(x) \geq V_{\infty}-\zeta, \text { for all }|x| \geq R \text {. }
$$
Hence
$$
\begin{aligned}
\Upsilon_{0, a}+\rho_1+o_n(1)>I_\epsilon\left(u_n\right) \geq & I_{\infty}\left(u_n\right)+\frac{1}{2} \int_{B_{R / \epsilon}(0)}\left(V(\epsilon x)-V_{\infty}\right)\left|u_n\right|^2 d x\\
& -\frac{\zeta}{2} \int_{B_{R / \varsigma}^\epsilon(0)}\left|u_n\right|^2 d x.
\end{aligned}
$$
Recalling that $\left(u_n\right)$ is bounded in $X$ and $u_n \rightarrow 0$ in $L^l\left(B_{R / \epsilon}(0)\right)$ for all $l \in\left[1,2^*\right)$, it follows that
$$
\Upsilon_{0, a}+\rho_1+o_n(1) \geq I_{\infty}\left(u_n\right)-\zeta C \geq \Upsilon_{\infty, a}-\zeta C
$$
for some $C>0$. Since $\zeta>0$ is arbitrary, we deduce that
$$
\Upsilon_{0, a}+\rho_1 \geq \Upsilon_{\infty, a},
$$
which contradicts the definition of $\rho_1$. Thus, the weak limit $u$ of $\left(u_n\right)$ is nontrivial.

\end{proof}
\begin{lemma}\label{nontrivial1}
Let $\left(u_n\right) \subset S(a)$ be a $(P S)_c$ sequence for $I_\epsilon$ constrained to $S(a)$ with \linebreak $c<\Upsilon_{0, a}+\rho_1<0$ and $u_n \rightharpoonup u_\epsilon$ in $X$, that is,
$$
I_\epsilon\left(u_n\right) \rightarrow c \text { as } n \rightarrow+\infty \quad \text { and }\left\|\left.I_\epsilon^{\prime}\right|_{S(a)} \left(u_n\right)\right\| \rightarrow 0 \quad \text { as } n \rightarrow+\infty .
$$
If $v_n=u_n-u_\epsilon \not\rightarrow 0$ in $X$, then  there exists $\beta>0$ independent of $\epsilon \in\left(0, \epsilon_0\right)$ such that, by decreasing $\epsilon_0$ if necessary,
$$
\liminf _{n \rightarrow+\infty}\left|u_n-u_\epsilon\right|_2^2 \geq \beta .
$$
\end{lemma}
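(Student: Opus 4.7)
The plan is to argue by contradiction: show that if $\liminf_n |v_n|_2$ is too small, then in fact $v_n\to 0$ in $X$, contradicting the hypothesis. The key quantitative step will be an inequality of the form $K|v_n|_2^2\leq C_\tau|v_n|_2^q+o_n(1)$ with $q>2$ and $K>0$ bounded below \emph{independently of $\epsilon$}, from which either $|v_n|_2\to 0$ (and I will upgrade to $\|v_n\|_X\to 0$) or $\liminf |v_n|_2^2\geq (K/C_\tau)^{2/(q-2)}=:\beta$.

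I first extract Lagrange multipliers: a standard argument from the constrained Palais-Smale condition produces a bounded sequence $(\lambda_n)$ with $I_\epsilon'(u_n)-\lambda_n u_n\to 0$ in $X'$. Testing against $u_n\in S(a)$ and using $(F_2'(s)-F_1'(s))s=s^2\log s^2+s^2$, I obtain $\lambda_n a^2=2I_\epsilon(u_n)-a^2+o_n(1)$, whence $\lambda_n\to\lambda:=2c/a^2-1$. Since $c<\Upsilon_{0,a}+\rho_1<0$, this forces $\lambda<-1$ \emph{uniformly in $\epsilon$}. Passing to the limit, $u_\epsilon$ satisfies $I_\epsilon'(u_\epsilon)=\lambda u_\epsilon$, and by Lemma \ref{nontrivial} we have $u_\epsilon\neq 0$.

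Subtracting the two equations and testing against $v_n$ (using $v_n\rightharpoonup 0$ in $H^1$ to drop cross terms and $(\lambda_n-\lambda)\int u_n v_n=o_n(1)$), I arrive at
\begin{equation*}
\int|\nabla v_n|^2+\int(V(\epsilon x)+1)|v_n|^2+A_n-B_n=\lambda|v_n|_2^2+o_n(1),
\end{equation*}
where $A_n:=\int[F_1'(u_n)-F_1'(u_\epsilon)]v_n\geq 0$ by convexity of $F_1$, and $B_n:=\int[F_2'(u_n)-F_2'(u_\epsilon)]v_n=\int F_2'(v_n)v_n+o_n(1)$ by the standard Brezis-Lieb lemma applied to the subcritical polynomial $F_2'$ together with the weak vanishing $\int F_2'(u_\epsilon)v_n\to 0$. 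For $\zeta>0$ small, $(V_1)$ and the local strong convergence $v_n\to 0$ in $L^2_{loc}$ give $\int V(\epsilon x)|v_n|^2\geq(V_\infty-\zeta)|v_n|_2^2+o_n(1)$. Estimating $|\int F_2'(v_n)v_n|\leq C|v_n|_p^p$ for some $p\in(2,2+4/N)$ and combining the Gagliardo-Nirenberg inequality with Young's inequality (permissible since $p\beta_p<2$) yields, after dropping $A_n\geq 0$,
\begin{equation*}
(1-\tau)|\nabla v_n|_2^2+K|v_n|_2^2\leq C_\tau|v_n|_2^q+o_n(1),
\end{equation*}
with $K=V_\infty+1-\zeta-\lambda>V_\infty+2-\zeta>0$ and $q=\tfrac{2p(1-\beta_p)}{2-p\beta_p}>2$, both independent of $\epsilon$.

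It remains to complete the dichotomy. If $|v_n|_2\not\to 0$ along the subsequence, dividing by $|v_n|_2^2$ and taking liminf gives $\liminf|v_n|_2^2\geq(K/C_\tau)^{2/(q-2)}$, the promised $\beta$. If instead $|v_n|_2\to 0$ along a subsequence, the same inequality forces $|\nabla v_n|_2\to 0$; then every term in the subtracted equation except $A_n$ vanishes, so $\int F_1'(u_n)v_n\to 0$ (since $\int F_1'(u_\epsilon)v_n\to 0$ by weak convergence). Convexity gives $\int F_1'(u_n)v_n\geq\int F_1(u_n)-\int F_1(u_\epsilon)$, and a Brezis-Lieb splitting for $F_1$, obtained from the standard one for $F_2$ combined with Lemma \ref{Brezis-Lieb} applied through $F_2-F_1=\tfrac{1}{2}s^2\log s^2$, yields $\int F_1(u_n)-\int F_1(u_\epsilon)=\int F_1(v_n)+o_n(1)$. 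Hence $\int F_1(v_n)\to 0$, and \eqref{ine02} forces $\|v_n\|_{F_1}\to 0$; combined with $\|v_n\|_{H^1}\to 0$, this gives $v_n\to 0$ in $X$, contradicting the hypothesis. The main technical obstacle is the Brezis-Lieb treatment of the $F_1$ contribution: since $F_1$ lacks polynomial structure, the splitting must be obtained indirectly via convexity of $F_1$ and Lemma \ref{Brezis-Lieb}, and the sign $\lambda<-1$ must be tracked carefully so that the constant $K$ is bounded below uniformly in $\epsilon$.
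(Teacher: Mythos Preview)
Your argument is correct and follows the same overall architecture as the paper's proof: extract a Lagrange multiplier that is bounded above by a negative constant uniformly in $\epsilon$, derive an approximate equation for $v_n$, and show that either $|v_n|_2$ has a uniform positive lower bound or $v_n\to 0$ in $X$. The implementation, however, differs in two places worth noting.

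First, to reach the $L^2$ lower bound, you apply Gagliardo--Nirenberg together with Young's inequality directly, obtaining $(1-\tau)|\nabla v_n|_2^2+K|v_n|_2^2\leq C_\tau|v_n|_2^q+o_n(1)$ with $q>2$; this yields the bound on $\liminf|v_n|_2^2$ in one step. The paper instead first uses Sobolev to get $\|v_n\|_{H^1}^2\leq C\|v_n\|_{H^1}^p+o_n(1)$, deduces $\liminf\|v_n\|_{H^1}>0$, then $\liminf|v_n|_p^p>0$, and only at the end invokes Gagliardo--Nirenberg (with a uniform $H^1$ bound) to convert this into a lower bound on $|v_n|_2$. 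Your route is a little more direct; the paper's is slightly more elementary in that it avoids Young and the auxiliary exponent $q$.

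Second, in the step ruling out $\|v_n\|_{H^1}\to 0$, the paper works with the \emph{clean} splitting $I_\epsilon'(v_n)v_n-\lambda_\epsilon|v_n|_2^2=o_n(1)$ (obtained from a Br\'ezis--Lieb decomposition of $I_\epsilon'(u_n)u_n$), so that the relevant $F_1$-term is $\int F_1'(v_n)v_n$; it then uses the one-line inequality $F_1'(s)s\geq F_1(s)$ (a consequence of \eqref{ine01}) to conclude $\int F_1(v_n)\to 0$. You keep the cross term $A_n=\int[F_1'(u_n)-F_1'(u_\epsilon)]v_n$, control it by convexity, and then recover $\int F_1(v_n)\to 0$ via an indirect Br\'ezis--Lieb splitting for $F_1$ built from Lemma~\ref{Brezis-Lieb} and the polynomial $F_2$. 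This works, but the paper's treatment is shorter and avoids the detour. Incidentally, the introduction of $V_\infty$ in your lower bound for $\int V(\epsilon x)|v_n|^2$ is not needed: the paper simply drops $(V(\epsilon x)+1)|v_n|^2\geq 0$ and relies on $-\lambda_*>0$ alone.
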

\begin{proof}
Letting $\Psi: X \rightarrow \mathbb{R}$ be the functional defined as
$$
\Psi(u)=\frac{1}{2} \int_{\mathbb{R}^N}|u|^2 dx,
$$
it follows that $S(a)=\Psi^{-1}\left(\left\{a^2 / 2\right\}\right)$. Then, by Willem \cite[Proposition~5.12]{Willem}, there exists $\left(\lambda_n\right) \subset \mathbb{R}$ such that
$$
\left\|I_\epsilon^{\prime}\left(u_n\right)-\lambda_n \Psi^{\prime}\left(u_n\right)\right\|_{X^{\prime}} \rightarrow 0 \quad \text { as } n \rightarrow+\infty .
$$
Since $\left(u_n\right)$ is bounded in $X$, it follows that $\left(\lambda_n\right)$ is also a bounded sequence.  We can assume, without loss of generality, that $\lambda_n \rightarrow \lambda_\epsilon$ as $n \rightarrow+\infty$. Hence,
$$
\left\|I_\epsilon^{\prime}\left(u_n\right)-\lambda_\epsilon \Psi^{\prime}\left(u_n\right)\right\|_{X^{\prime}} \rightarrow 0 \,\,\text { as } \,\, n \rightarrow+\infty
$$
and so,
$$
I_\epsilon^{\prime}\left(u_\epsilon\right)-\lambda_\epsilon \Psi^{\prime}\left(u_\epsilon\right)=0 \quad \text {in} \quad X^{\prime}.
$$
 The Lemma \ref{Brezis-Lieb} implies that
$$
I_\epsilon^{\prime}\left(u_n\right)u_n=I_\epsilon^{\prime}\left(u_\epsilon\right)u_\epsilon+I_\epsilon^{\prime}\left(v_n\right)v_n+o_n(1),
$$
and by applying Br\'{e}zis-Lieb Lemma (see \cite{Willem}), we obtain
$$
\Psi_\epsilon^{\prime}\left(u_n\right)u_n=\Psi_\epsilon^{\prime}\left(u_\epsilon\right)u_\epsilon+\Psi_\epsilon^{\prime}\left(v_n\right)v_n+o_n(1).
$$
From this,
$$
I_\epsilon^{\prime}\left(u_n\right)u_n-\lambda_\epsilon \Psi^{\prime}\left(u_n\right)u_n=I_\epsilon^{\prime}\left(u_\epsilon\right)u_\epsilon+I_\epsilon^{\prime}\left(v_n\right)v_n-\lambda_\epsilon \Psi^{\prime}\left(u_\epsilon\right)u_\epsilon-\lambda_\epsilon \Psi^{\prime}\left(v_n\right)v_n +o_n(1),
$$
from  where it follows that
$$
I_\epsilon^{\prime}\left(v_n\right)v_n-\lambda_\epsilon \Psi^{\prime}\left(v_n\right)v_n \rightarrow 0 \quad \text { as } \quad n \rightarrow+\infty,
$$
or equivalently,
\begin{equation} \label{NEWEQUATION}
	\int_{\mathbb{R}^N}\left(\left|\nabla v_n\right|^2+(V(\epsilon x)+1)\left|v_n\right|^2\right) d x+\int_{\mathbb{R}^N} F'_1(v_n)v_nd x-\lambda_\epsilon \int_{\mathbb{R}^N}\left|v_n\right|^2 d x=\int_{\mathbb{R}^N} F'_2(v_n)v_n d x+o_n(1).
\end{equation}
Then,
$$
0>\rho_1+\Upsilon_{0, a} \geq \liminf _{n \rightarrow+\infty} I_\epsilon\left(u_n\right)=\liminf _{n \rightarrow+\infty}\left(I_\epsilon\left(u_n\right)-\frac{1}{2} I_\epsilon^{\prime}\left(u_n\right) u_n+\frac{1}{2}\lambda_\epsilon a^2\right).
$$
Since
$$
I_\epsilon\left(u_n\right)-\frac{1}{2} I_\epsilon^{\prime}\left(u_n\right) u_n=\frac{1}{2}\int_{\mathbb{R}^N}|u_n|^2\,dx \geq 0,
$$
we conclude that
$$
0>\rho_1+\Upsilon_{0, a} \geq \frac{\lambda_\epsilon a^2}{2},
$$
and so,
$$
\limsup _{\epsilon \rightarrow 0} \lambda_\epsilon \leq \frac{2(\rho_1+\Upsilon_{0, a})}{a^2}<0.
$$
Therefore, there exists $\lambda_*<0$ independent of $\epsilon$ such that
$$
\lambda_\epsilon \leq \lambda_*<0, \quad \forall \epsilon \in\left(0, \epsilon_0\right).
$$
Based on the above analysis, the fact that $F_1^{\prime}(s) s \geq 0$  and the equality \eqref{NEWEQUATION} ensure that
$$
\int_{\mathbb{R}^N}\left(\left|\nabla v_n\right|^2+(V(\epsilon x)+1)\left|v_n\right|^2\right) d x-\lambda_* \int_{\mathbb{R}^N}\left|v_n\right|^2 d x \leq \int_{\mathbb{R}^N} F'_2(v_n)v_n d x+o_n(1).
$$
This together with the growth of $F_2$ yields
$$
\int_{\mathbb{R}^N}\left(\left|\nabla v_n\right|^2-\lambda_*\left|v_n\right|^2\right) d x \leq C_2\left|v_n\right|_p^p+o_n(1).
$$
 By using the continuous Sobolev embedding $H^{1}(\mathbb{R}^N) \hookrightarrow L^p\left(\mathbb{R}^N\right)$, we obtain
\begin{equation}\label{ine41}
\left\|v_n\right\|_{H^{1}}^2 \leq C_3\left|v_n\right|_p^p+o_n(1) \leq C_4\left\|v_n\right\|_{H^{1}}^p+o_n(1)
\end{equation}
where $C_3, C_4>0$ are independent of $\epsilon$. Since $v_n \nrightarrow 0$ in $ X$, for some subsequence of $\left(v_n\right)$, still denoted by $\left(v_n\right)$, we can assume that $\displaystyle \liminf_{n \rightarrow+\infty}\left\|v_n\right\|_{H^1}>0$, because if $v_n \to 0$ in $H^{1}(\mathbb{R}^N)$, we will have $F_2'(v_n)v_n \to 0$ in $L^{1}(\mathbb{R}^N)$.  This together with \eqref{NEWEQUATION} and the fact that $F_1'(s)s \geq F_1(s)$ for all $s \geq 0$ implies that $F_1(v_n) \to 0$ in $L^{1}(\mathbb{R}^N)$. As $F_1 \in (\Delta_2)$, the last limit yields that $v_n \to 0$ in $L^{F_1}(\mathbb{R}^N)$, then $v_n \to 0$ in $X$, which is absurd. Then, the  inequality \eqref{ine41} ensures that
\begin{equation}\label{ine42}
\liminf _{n \rightarrow+\infty}\left\|v_n\right\|_{H^{1}} \geq\left(\frac{1}{C_4}\right)^{\frac{1}{p-2}}.
\end{equation}
From \eqref{ine41} and \eqref{ine42},
\begin{equation}\label{ine43}
\liminf _{n \rightarrow+\infty}\left|v_n\right|_p^p \geq C_5,
\end{equation}
for some $C_5>0$ that does not depend on $\epsilon$. By employing the Gagliardo-Nirenberg inequality,
$$
\left|v_n\right|_p^p \leq C\left|v_n\right|_2^{\left(1-\beta_p\right) p}\left|\nabla v_n\right|_2^{\beta_p p} \quad \text { with } \quad \beta_p=N\left(\frac{1}{2}-\frac{1}{p}\right)
$$
we can further deduce that
\begin{equation}\label{ine44}
\liminf _{n \rightarrow+\infty}\left|v_n\right|_p^p \leq C\left(\liminf _{n \rightarrow+\infty}\left|v_n\right|_2\right)^{\left(1-\beta_p\right) p} K^{\beta_p p} \quad \text { with } \quad \beta_p=N\left(\frac{1}{2}-\frac{1}{p}\right)
\end{equation}
where $K>0$ is a constant independent of $\epsilon \in\left(0, \epsilon_0\right)$ satisfying $\left\|v_n\right\|_{H^1} \leq K$ for all $n \in \mathbb{N}$. Now the lemma follows from \eqref{ine43} and \eqref{ine44}.

\end{proof}
From now on, we fix $0<\rho<\min \left\{\frac{1}{2}, \frac{\beta}{a^2}\right\}\left(\Upsilon_{\infty, a}-\Upsilon_{0, a}\right) \leq \rho_1$, and provide a range of levels in which the functional $I_\epsilon$ satisfies the $(PS)$ condition constrained  to $S(a)$.

\begin{lemma}\label{compactness1234}
 For each $\epsilon \in\left(0, \epsilon_0\right)$, the functional $I_\epsilon$ satisfies the $(P S)_c$ condition constrained  to $S(a)$ for $c<\Upsilon_{0, a}+\rho$.
\end{lemma}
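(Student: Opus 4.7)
The plan is to argue by contradiction, following the template of Theorem~\ref{compactness} but adapted to the non-autonomous functional $I_\epsilon$ and the small-level perturbation $\rho$. Let $(u_n)\subset S(a)$ be a $(PS)_c$ sequence for $I_\epsilon|_{S(a)}$ with $c<\Upsilon_{0,a}+\rho$. The coercivity argument of Lemma~\ref{coercive} (with $V(\epsilon x)$ in place of $\mu$) yields boundedness of $(u_n)$ in $X$, so up to a subsequence $u_n\rightharpoonup u_\epsilon$; since $\rho\leq \rho_1$, Lemma~\ref{nontrivial} forces $u_\epsilon\neq 0$. Set $v_n=u_n-u_\epsilon$, $b=|u_\epsilon|_2>0$ and $d_n=|v_n|_2$. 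By Br\'ezis--Lieb in $L^2$, $a^2=b^2+d_n^2+o_n(1)$, so after extraction $d_n\to d$ with $a^2=b^2+d^2$. If $v_n\to 0$ in $X$ we are done; otherwise Lemma~\ref{nontrivial1} forces $d^2\geq \beta>0$, so $0<b,\,d<a$.

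Next I would establish the energy splitting
$$
I_\epsilon(u_n)=I_\epsilon(u_\epsilon)+I_\epsilon(v_n)+o_n(1),
$$
and then bound $I_\epsilon(v_n)$ from below by $I_\infty(v_n)+o_n(1)$. The kinetic piece splits by the standard Br\'ezis--Lieb lemma, and $\int (V(\epsilon x)+1)|u_n|^2$ splits because $v_n\rightharpoonup 0$ weakly in $L^2$ against the bounded multiplier $V(\epsilon x)+1$. For the nonlinearities, Lemma~\ref{Brezis-Lieb} handles the log combination $F_2-F_1=\tfrac12 s^2\log s^2$, while the $\Delta_2$-property of $F_1$ together with a.e.\ convergence yields a Br\'ezis--Lieb identity for $\int F_1(u_n)$, and hence for $\int F_2(u_n)$ as well. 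For the potential comparison, decompose $\mathbb{R}^N=B_{R/\epsilon}\cup B_{R/\epsilon}^c$: on the ball $v_n\to 0$ in $L^2_{\mathrm{loc}}$ kills the contribution, and outside $V(\epsilon x)\geq V_\infty-\zeta$ by $(V_1)$, which upon letting $\zeta\downarrow 0$ gives $\liminf_n I_\epsilon(v_n)\geq \liminf_n I_\infty(v_n)$.

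Finally, rescale $w_n=(d/d_n)v_n\in S(d)$; since $d_n\to d$ and $F_1\in(\Delta_2)$, $I_\infty(w_n)-I_\infty(v_n)\to 0$, hence $\liminf_n I_\infty(v_n)\geq \Upsilon_{\infty,d}$. Combined with the cheap bound $I_\epsilon(u_\epsilon)\geq \Upsilon_{\epsilon,b}\geq \Upsilon_{0,b}$ (which uses $V\geq V_0$), this gives $c\geq \Upsilon_{0,b}+\Upsilon_{\infty,d}$. Observe the key identity $\Upsilon_{\infty,d}=\Upsilon_{0,d}+\tfrac12(V_\infty-V_0)d^2$, since $I_\infty-I_0$ is the constant $\tfrac12(V_\infty-V_0)d^2$ on $S(d)$. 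The scaling argument in the proof of Lemma~\ref{subadditive} actually gives $\tfrac{t^2}{a^2}\Upsilon_{0,a}<\Upsilon_{0,t}$ for every $0<t<a$ (the hypothesis $t\geq a_*$ is only needed for the sign conclusion), so applying it to $(b,a)$ and $(d,a)$ and adding yields $\Upsilon_{0,b}+\Upsilon_{0,d}>\Upsilon_{0,a}$. Combining with $d^2\geq \beta$ and $\Upsilon_{\infty,a}-\Upsilon_{0,a}=\tfrac12(V_\infty-V_0)a^2$, I arrive at
$$
c\,>\,\Upsilon_{0,a}+\frac{\beta}{a^2}\bigl(\Upsilon_{\infty,a}-\Upsilon_{0,a}\bigr)\,>\,\Upsilon_{0,a}+\rho,
$$
contradicting the hypothesis on $c$.

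The main technical obstacle I foresee is the Br\'ezis--Lieb splitting $\int F_1(u_n)=\int F_1(u_\epsilon)+\int F_1(v_n)+o_n(1)$ in the Orlicz space $L^{F_1}(\mathbb{R}^N)$, since Lemma~\ref{Brezis-Lieb} by itself only covers the algebraic sum $F_2-F_1$; isolating each piece rests on the $\Delta_2$-structure of $F_1$ together with pointwise a.e.\ convergence. A related minor point is the continuity of $I_\infty$ under the rescaling $v_n\mapsto (d/d_n)v_n$ as $d_n\to d$: the $H^1$ contribution is immediate, while the $L^{F_1}$-continuity once more reduces to $F_1\in(\Delta_2)$.
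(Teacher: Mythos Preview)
Your proposal is correct and follows essentially the same approach as the paper, with only cosmetic differences: the paper works with $v_n\in S(d_n)$ directly and applies Lemma~\ref{subadditive} in the form $\Upsilon_{\infty,d_n}>\tfrac{d_n^2}{a^2}\Upsilon_{\infty,a}$ and $\Upsilon_{0,b}>\tfrac{b^2}{a^2}\Upsilon_{0,a}$ \emph{before} passing to the limit, thereby bypassing both your rescaling $w_n=(d/d_n)v_n$ and the explicit identity $\Upsilon_{\infty,d}=\Upsilon_{0,d}+\tfrac12(V_\infty-V_0)d^2$. Note also that the ``main technical obstacle'' you flag is not one: since $\int F_1(u)-\int F_2(u)=-\tfrac12\int u^2\log u^2$, the energy splitting $I_\epsilon(u_n)=I_\epsilon(u_\epsilon)+I_\epsilon(v_n)+o_n(1)$ follows from Lemma~\ref{Brezis-Lieb} together with the ordinary Br\'ezis--Lieb lemma for $|\nabla\cdot|^2$ and $(V(\epsilon x)+1)|\cdot|^2$, so a separate Br\'ezis--Lieb identity for $\int F_1$ alone is never needed.
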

\begin{proof} Let $\left(u_n\right)$ be a $(P S)_c$ sequence for $I_\epsilon$ constrained  to $S(a)$ with $u_n \rightharpoonup u_\epsilon$ in $X_{0}$ and $c<\Upsilon_{0, a}+\rho$. Setting  the functional $\Psi: X \rightarrow \mathbb{R}$ given by
$$
\Psi(u)=\frac{1}{2} \int_{\mathbb{R}^N}|u|^2 d x,
$$
it follows that $S(a)=\Psi^{-1}\left(\left\{a^2 / 2\right\}\right)$. Then, by Willem \cite[Proposition~5.12]{Willem}, there exists $\left(\lambda_n\right) \subset \mathbb{R}$ such that
$$
\left\|I_\epsilon^{\prime}\left(u_n\right)-\lambda_n \Psi^{\prime}\left(u_n\right)\right\|_{X^{\prime}} \rightarrow 0 \text { as } n \rightarrow+\infty \text {. }
$$
Using Lemma \ref{nontrivial1}, if $v_n=u_n-u_\epsilon \not\rightarrow 0$ in $X$, there exists $\beta>0$ independent of $\epsilon$ such that
$$
\liminf _{n \rightarrow+\infty}\left|v_n\right|_2^2 \geq \beta
$$
Fix $d_n=\left|v_n\right|_2$ and assume that $\left|v_n\right|_2 \rightarrow d>0$ and $\left|u_\epsilon\right|_2=b$. Consequently,  $a^2=b^2+d^2$,  $b>0$ and $d_n \in(0, a)$ for $n\in \mathbb{N}$ large enough. Recalling the arguments explored in the proof of Lemma \ref{nontrivial}, we have that $I_\epsilon\left(v_n\right) \geq \Upsilon_{\infty, d_n}+o_n(1)$, from where it follows that
$$
c+o_n(1)=I_\epsilon\left(u_n\right)=I_\epsilon\left(v_n\right)+I_\epsilon\left(u_\epsilon\right)+o_n(1) \geq \Upsilon_{\infty, d_n}+\Upsilon_{0, b}+o_n(1) .
$$
Arguing as in the proof Lemma \ref{subadditive},
$$
\rho+\Upsilon_{0, a} \geq \frac{d_n^2}{a^2} \Upsilon_{\infty, a}+\frac{b^2}{a^2} \Upsilon_{0, a}.
$$
Letting $n \rightarrow+\infty$, we obtain
$$
\rho \geq \frac{d^2}{a^2}\left(\Upsilon_{\infty, a}-\Upsilon_{0, a}\right) \geq \frac{\beta}{a^2}\left(\Upsilon_{\infty, a}-\Upsilon_{0, a}\right),
$$
which is absurd, because $\rho<\frac{\beta}{a^2}\left(\Upsilon_{\infty, a}-\Upsilon_{0, a}\right)$. From $v_n \rightarrow 0$ in $X$, that is, $u_n \rightarrow u_\epsilon$ in $X$, implying that $\left|u_\epsilon\right|_2=a$ and
$$
-\Delta u_\epsilon+V(\epsilon x) u_\epsilon=\lambda_\epsilon u_\epsilon+u_\epsilon \log u_\epsilon^2, \text { in } \mathbb{R}^N,
$$
where $\lambda_\epsilon$ is the limit of some subsequence of $\left(\lambda_n\right)$.
\end{proof}
\subsection{Multiplicity result for \eqref{11}}
Let $\delta>0$ be fixed, and let $w$ be a positive solution of the problem
\begin{align}\label{231}
 \left\{
\begin{aligned}
&-\Delta u=\lambda u+u \log u^2,
\quad
\hbox{in}\,\,\mathbb{R}^N,\\
&\int_{\mathbb{R}^{N}}|u|^{2}dx=a^{2},
\end{aligned}
\right.
\end{align}
with $I_0(w)=\Upsilon_{0, a}$. Let $\eta$ be a smooth nonincreasing cut-off function defined in $[0, \infty)$ such that $\eta(s)=1$ if $0 \leq s \leq \frac{\delta}{2}$ and $\eta(s)=0$ if $s \geq \delta$. For any $y \in M$, we define
$$
\begin{gathered}
\Psi_{\epsilon, y}(x)=\eta(|\epsilon x-y|) w((\epsilon x-y) / \epsilon), \\
\tilde{\Psi}_{\epsilon, y}(x)=a \frac{\Psi_{\epsilon, y}(x)}{\left|\Psi_{\epsilon, y}\right|_2},
\end{gathered}
$$
and $\Phi_\epsilon: M \rightarrow S(a)$ by $\Phi_\epsilon(y)=\tilde{\Psi}_{\epsilon, y}$. By construction, for any $y \in M$, the function $\Phi_\epsilon(y)$ has compact support.
\begin{lemma}\label{compactness12}
The function $\Phi_\epsilon$ has the following limit
$$
\lim _{\epsilon \rightarrow 0} I_\epsilon\left(\Phi_\epsilon(y)\right)=\Upsilon_{0, a}, \text { uniformly in } \,\, y \in M.
$$
\end{lemma}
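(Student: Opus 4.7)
The plan is to compute $I_\epsilon(\Phi_\epsilon(y))$ after translating $x\mapsto z=x-y/\epsilon$ to remove the $y$-dependence from the profile of $w$, and then pass to the limit term by term via dominated convergence. I would first note that by $(V_1)$ the set $M=V^{-1}(\{V_0\})$ is compact: it is closed by continuity of $V$, and bounded because $\liminf_{|x|\to\infty}V(x)=V_\infty>V_0$. This compactness is precisely what will upgrade pointwise-in-$y$ limits to uniform ones.

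Fix $y\in M$ and perform the change of variables $z=x-y/\epsilon$, so that $\Psi_{\epsilon,y}(z+y/\epsilon)=\eta(\epsilon|z|)w(z)$. Since $\eta(\epsilon|z|)\to 1$ pointwise and $|\eta(\epsilon|z|)w(z)|^2\le w(z)^2\in L^1(\mathbb{R}^N)$, dominated convergence yields $|\Psi_{\epsilon,y}|_2\to|w|_2=a$, and hence $c_\epsilon(y):=a/|\Psi_{\epsilon,y}|_2\to 1$; because after the translation the integrand no longer depends on $y$, this convergence is automatically uniform in $y\in M$.

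Next, I would decompose $I_\epsilon(\Phi_\epsilon(y))=I_\epsilon(c_\epsilon(y)\Psi_{\epsilon,y})$ into its four natural pieces and pass to the limit in each, using the same change of variables. The gradient term equals
\[
\frac{c_\epsilon(y)^2}{2}\int_{\mathbb{R}^N}\bigl|\epsilon\eta'(\epsilon|z|)\tfrac{z}{|z|}w(z)+\eta(\epsilon|z|)\nabla w(z)\bigr|^2 dz,
\]
in which the $\epsilon\eta'$ contribution is $O(\epsilon)$ (since $\eta'$ is bounded and $w\in L^2$), while the principal part tends to $\tfrac12|\nabla w|_2^2$. The potential term becomes $\tfrac{c_\epsilon(y)^2}{2}\int(V(\epsilon z+y)+1)\eta(\epsilon|z|)^2 w(z)^2 dz$, which converges to $\tfrac12(V_0+1)a^2$ using $V(y)=V_0$, the continuity and boundedness of $V$. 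The $F_2$-term is treated via the subcritical bound in $(P_2)$, which gives $|F_2(c_\epsilon(y)\eta(\epsilon|z|)w(z))|\le C(w^2+w^p)\in L^1$, so dominated convergence yields $\int F_2\to\int F_2(w)$. The $F_1$-term is handled via the $\Delta_2$-condition of Lemma \ref{N-function}: since $0\le c_\epsilon(y)\eta(\epsilon|z|)\le C$ uniformly in $\epsilon,y$, we get $F_1(c_\epsilon(y)\eta(\epsilon|z|)w(z))\le K F_1(w)\in L^1$, so again dominated convergence gives the limit $\int F_1(w)$. Collecting these yields $I_\epsilon(\Phi_\epsilon(y))\to I_0(w)=\Upsilon_{0,a}$.

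The main obstacle will be upgrading these pointwise-in-$y$ limits to uniform limits on $M$. After the translation the only remaining $y$-dependence is through $V(\epsilon z+y)$ in the potential term and through $c_\epsilon(y)$ in the four prefactors; the uniformity of $c_\epsilon(y)$ was already established. For the potential, I would split the integral over a large ball $B_R$ and its complement: the exterior contribution is bounded by $C\int_{|z|\ge R}w^2 dz$, which is small uniformly in $y$, while on $B_R$ uniform continuity of $V$ on the compact set $\overline{B_R}+M$ yields uniform smallness of $|V(\epsilon z+y)-V_0|$. An analogous $\epsilon/3$-splitting, using $w\in L^2$, $|\nabla w|\in L^2$ and $F_1(w),F_2(w)\in L^1$, handles the tails in the remaining terms, delivering uniform convergence in $y\in M$ and completing the proof.
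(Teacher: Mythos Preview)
Your proof is correct and follows essentially the same route as the paper: translate $x\mapsto z=x-y/\epsilon$ and pass to the limit in each piece of $I_\epsilon(\Phi_\epsilon(y))$ by dominated convergence, using convexity and the $\Delta_2$-property for the $F_1$ term and the subcritical bound $(P_2)$ for $F_2$. The only cosmetic difference is that the paper handles the uniformity in $y\in M$ by a contradiction-and-subsequence argument (negate uniform convergence, extract $y_n\to y$ by compactness of $M$, then apply DCT along the sequence), whereas you obtain it directly via your $\epsilon/3$ splittings; both are standard and equivalent.
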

\begin{proof}
Assume, by contradiction, that there exist $\delta_0>0$, $\left(y_n\right) \subset M$ with $y_n \to y$ and $\epsilon_n \rightarrow 0$ such that
$$
\left|I_{\epsilon_n}\left(\Phi_{\epsilon_n}\left(y_n\right)\right)-\Upsilon_{0, a}\right| \geq \delta_0, \quad \forall n \in \mathbb{N}.
$$
By applying the Lebesgue Dominated Convergence Theorem, we can show that
$$
\begin{aligned}
& \lim _{n \rightarrow+\infty} \int_{\mathbb{R}^N}\left|\Psi_{\epsilon_n, y_n}\right|^2 d x=\lim _{n \rightarrow+\infty} \int_{\mathbb{R}^N}\left|\eta\left(\epsilon_n z\right) w(z)\right|^2 d z=\int_{\mathbb{R}^N}|w(z)|^2 d z=a^2,\\
& \lim _{n \rightarrow+\infty} \int_{\mathbb{R}^N} F_1\left(\Phi_{\epsilon_n}\left(y_n\right)\right) d x=\lim _{n \rightarrow+\infty} \int_{\mathbb{R}^N} F_1\left(a \frac{\eta\left(\epsilon_n z\right) w(z)}{\mid \Psi_{\epsilon_n, y_n \mid 2}}\right) d z=\int_{\mathbb{R}^N} F_1(w(z)) dz, \\
& \lim _{n \rightarrow+\infty} \int_{\mathbb{R}^N} F_2\left(\Phi_{\epsilon_n}\left(y_n\right)\right) d x=\lim _{n \rightarrow+\infty} \int_{\mathbb{R}^N} F_2\left(a \frac{\eta\left(\epsilon_n z\right) w(z)}{\mid \Psi_{\epsilon_n, y_n \mid 2}}\right) d z=\int_{\mathbb{R}^N} F_2(w(z)) dz, \\
& \lim _{n \rightarrow+\infty} \int_{\mathbb{R}^N}\left|\nabla \Phi_{\epsilon_n}\left(y_n\right)\right|^2 d x=\lim _{n \rightarrow+\infty} \int_{\mathbb{R}^N} \frac{a^2}{\left|\Psi_{\epsilon_n, y}\right|_2^2}\left|\nabla\left(\eta\left(\epsilon_n z\right) w(z)\right)\right|^2 d z=\int_{\mathbb{R}^N}|\nabla w(z)|^2 dz,
\end{aligned}
$$
and
$$
\lim _{n \rightarrow+\infty} \int_{\mathbb{R}^N} V\left(\epsilon_n x\right)\left|\Phi_{\epsilon_n}\left(y_n\right)\right|^2 d x=\lim _{n \rightarrow+\infty} \int_{\mathbb{R}^N}\frac{ V\left(\epsilon_n z+y_n\right)a^2 }{\left|\Psi_{\epsilon_n, y}\right|_2^2}\left|\eta\left(\epsilon_n z\right) w(z)\right|^2 d z=V_0\int_{\mathbb{R}^N}|w(z)|^2 d z .
$$
Consequently,
$$
\lim _{n \rightarrow+\infty} I_{\epsilon_n}\left(\Phi_{\epsilon_n}\left(y_n\right)\right)=I_{0, a}(w)=\Upsilon_{0, a},
$$
which is absurd.
\end{proof}

Now we define the barycenter map. For any $\delta>0$, let $R=R(\delta)>0$ be such that $M_\delta \subset B_R(0)$. Let $\chi: \mathbb{R}^N \rightarrow \mathbb{R}^N$ be defined as $\chi(x)=x$ for $|x| \leq R$ and $\chi(x)=\frac{R x}{|x|}$ for $|x| \geq R$. Finally, let us consider $\beta_\epsilon: S(a) \rightarrow \mathbb{R}^N$ given by
$$
\beta_\epsilon(u)=\frac{\int_{\mathbb{R}^N} \chi(\epsilon x)|u|^2 d x}{a^2}.
$$
\begin{lemma}\label{nonempty}
 The function $\Phi_\epsilon$ has the following limit
$$
\lim _{\epsilon \rightarrow 0} \beta_\epsilon\left(\Phi_\epsilon(y)\right)=y, \text { uniformly in } y \in M
$$
\end{lemma}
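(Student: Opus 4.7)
The plan is to follow the contradiction scheme used in Lemma \ref{compactness12}. Suppose the claim fails, so there exist $\delta_0 > 0$, sequences $\epsilon_n \to 0^+$ and $y_n \in M$ such that
\begin{equation*}
|\beta_{\epsilon_n}(\Phi_{\epsilon_n}(y_n)) - y_n| \geq \delta_0, \quad \forall n \in \mathbb{N}.
\end{equation*}
A preliminary observation is that $(V_1)$ forces $M$ to be bounded: since $V_0 < V_\infty$, there is $R_0 > 0$ with $V(x) > V_0$ for $|x| > R_0$, hence $M \subset \overline{B_{R_0}(0)}$. Choosing $R \geq R_0$ in the definition of $\chi$, we have $\chi(y) = y$ for every $y \in M$, and, after extracting a subsequence, $y_n \to y_0 \in M$.

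Next I would compute $\beta_{\epsilon_n}(\Phi_{\epsilon_n}(y_n))$ via the change of variables $z = (\epsilon_n x - y_n)/\epsilon_n$. Using $|\Phi_{\epsilon_n}(y_n)|_2 = a$ and the identity $|\Psi_{\epsilon_n, y_n}|_2^2 = \int_{\mathbb{R}^N} \eta(|\epsilon_n z|)^2 w(z)^2 \, dz$ (obtained by the same change of variables), a direct manipulation yields
\begin{equation*}
\beta_{\epsilon_n}(\Phi_{\epsilon_n}(y_n)) - y_n = \frac{\int_{\mathbb{R}^N} [\chi(\epsilon_n z + y_n) - y_n]\, \eta(|\epsilon_n z|)^2 w(z)^2 \, dz}{\int_{\mathbb{R}^N} \eta(|\epsilon_n z|)^2 w(z)^2\, dz},
\end{equation*}
where the constant $y_n$ has been pulled out of the integral and the denominator rewritten using the identity above.

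To close the contradiction I would apply the Lebesgue Dominated Convergence Theorem to both integrals. The integrand $\eta(|\epsilon_n z|)^2 w(z)^2$ converges pointwise to $w(z)^2$ and is dominated by $w^2 \in L^1(\mathbb{R}^N)$, so the denominator tends to $|w|_2^2 = a^2$. In the numerator, $\chi$ is continuous and bounded by $R$, $\chi(\epsilon_n z + y_n) \to \chi(y_0) = y_0$ pointwise, $y_n \to y_0$, and the whole integrand is dominated by $2R\, w^2 \in L^1(\mathbb{R}^N)$; hence the numerator tends to $0$. Consequently $\beta_{\epsilon_n}(\Phi_{\epsilon_n}(y_n)) - y_n \to 0$, contradicting the choice of $\delta_0$. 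The only genuinely delicate point is the boundedness of $M$, which ensures $\chi$ acts as the identity on $M$; once this is recorded, the remainder of the argument is a routine dominated-convergence computation that mirrors the one already performed in Lemma \ref{compactness12}.
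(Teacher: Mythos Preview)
Your argument is correct and follows essentially the same contradiction-plus-dominated-convergence route as the paper: assume failure along sequences $\epsilon_n\to 0$, $y_n\in M$, rewrite $\beta_{\epsilon_n}(\Phi_{\epsilon_n}(y_n))-y_n$ via the change of variables $x\mapsto z=x-y_n/\epsilon_n$, and pass to the limit. Your version is in fact slightly more careful than the paper's---you justify explicitly why $M$ is bounded (so that $\chi\big|_M=\mathrm{id}$) and you keep the exact denominator $\int \eta(|\epsilon_n z|)^2 w(z)^2\,dz$ rather than replacing it by $a^2$---but these are refinements of the same proof, not a different approach.
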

\begin{proof}
We will prove this lemma by contradiction. Assume the contrary, i.e., there exist $\delta_0>0$, $\left(y_n\right) \subset M$ with $y_n \to y$ and $\epsilon_n \rightarrow 0$ such that
\begin{equation}\label{ine61}
\left|\beta_{\epsilon_n}\left(\Phi_{\epsilon_n}\left(y_n\right)\right)-y_n\right| \geq \delta_0, \quad \forall n \in \mathbb{N}.
\end{equation}
Using the definition of $\Phi_{\epsilon_n}\left(y_n\right)$ and $\beta_{\epsilon_n}$, we have the following equality
$$
\beta_{\epsilon_n}\left(\Phi_{\epsilon_n}\left(y_n\right)\right)=y_n+\frac{\int_{\mathbb{R}^N}\left(\chi\left(\epsilon_n z+y_n\right)-y_n\right)\left|\eta\left(\epsilon_n z\right) w(z)\right|^2 d z}{a^2} .
$$
Since $\left(y_n\right) \subset M \subset B_R(0)$, the Lebesgue Dominated Convergence Theorem implies that
$$
\left|\beta_{\epsilon_n}\left(\Phi_{\epsilon_n}\left(y_n\right)\right)-y_n\right| \rightarrow 0, \quad \text { as } n \rightarrow+\infty
$$
which contradicts \eqref{ine61}. Hence the lemma is proved.
\end{proof}
Now, we prove the following useful compactness result.
\begin{proposition}\label{com}
 Let $\epsilon_n \rightarrow 0$ and $\left(u_n\right) \subset S(a)$ with $I_{\epsilon_n}\left(u_n\right) \rightarrow \Upsilon_{0, a}$. Then, there is $\left(\tilde{y}_n\right) \subset \mathbb{R}^N$ such that $v_n(x)=u_n\left(x+\tilde{y}_n\right)$ has a convergent subsequence in $X$. Moreover, up to a subsequence, $y_n=\epsilon_n \tilde{y}_n \rightarrow y$ for some $y \in M$.
 \end{proposition}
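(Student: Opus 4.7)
The plan is to mimic the compactness scheme already developed for the autonomous problem in Theorem \ref{compactness}, but track how the translation interacts with $V(\epsilon_n \cdot)$ so that the limit point $y$ is forced to lie in $M$. I would proceed in five steps.

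First, I would establish boundedness of $(u_n)$ in $X$. Since $V \geq -1$ and $I_{\epsilon_n}(u_n)\to \Upsilon_{0,a}<0$, the same Gagliardo--Nirenberg / Orlicz estimate used in Lemma \ref{coercive} (with $(\mu+1)$ replaced by $(V(\epsilon_n x)+1)\ge 0$) gives $\sup_n\|u_n\|<\infty$. Second, I would locate mass via concentration. Exactly as in Claim \ref{Claim 3.3.}, the fact that $I_{\epsilon_n}(u_n)\to\Upsilon_{0,a}<0$ combined with $F_1\ge 0$ forces $\int_{\mathbb{R}^N}F_2(u_n)\,dx\ge C>0$ for large $n$; by the subcritical growth \eqref{subcritical} and Lions' lemma, there exist $R,\beta>0$ and $\tilde y_n\in\mathbb{R}^N$ with $\int_{B_R(\tilde y_n)}|u_n|^2\,dx\ge\beta$. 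Setting $v_n(x)=u_n(x+\tilde y_n)$, pass to a subsequence so that $v_n\rightharpoonup v\neq 0$ in $X$ with $v_n\to v$ a.e.

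Third, I would prove that $y_n:=\epsilon_n\tilde y_n$ is bounded. Suppose $|y_n|\to\infty$. Then by $(V_1)$, for any $\zeta>0$ and any fixed ball $B_\rho(0)$, eventually $V(\epsilon_n x+y_n)\ge V_\infty-\zeta$ on $B_\rho(0)$. Arguing as in Lemma \ref{nontrivial}, one writes
\begin{equation*}
I_{\epsilon_n}(u_n)=\tfrac12\!\!\int_{\mathbb{R}^N}\!\!\Big(|\nabla v_n|^2+(V(\epsilon_n x+y_n)+1)|v_n|^2\Big)dx+\!\!\int_{\mathbb{R}^N}\!\!F_1(v_n)dx-\!\!\int_{\mathbb{R}^N}\!\!F_2(v_n)dx,
\end{equation*}
and splitting the potential integral on $B_\rho(0)$ versus its complement, together with the arbitrariness of $\zeta$ and $\rho$, one obtains $\Upsilon_{0,a}=\lim I_{\epsilon_n}(u_n)\ge\Upsilon_{\infty,a}$, contradicting \eqref{comparison}. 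Hence $y_n\to y$ along a subsequence. Fourth, I would show $y\in M$. Since $V(\epsilon_n x+y_n)\to V(y)$ pointwise and $V$ is bounded, Fatou's lemma applied to $I_{\epsilon_n}(v_n)$ yields, for the shifted autonomous functional $I_{V(y)}$ (the functional from Section 3 with $\mu=V(y)$),
\begin{equation*}
\Upsilon_{0,a}=\lim_{n\to\infty}I_{\epsilon_n}(u_n)\ge I_{V(y)}(v).
\end{equation*}
If $|v|_2=a$ then $I_{V(y)}(v)\ge\mathcal I_{V(y),a}$; Corollary \ref{coro1} forces $V(y)=V_0$. If $|v|_2=b<a$, I would first rule out vanishing by the same Lions/Brezis--Lieb splitting used in Theorem \ref{compactness}: set $d_n=|u_n-T_{\tilde y_n}v|_2\to d$ with $a^2=b^2+d^2$; applying Lemma \ref{Brezis-Lieb} and the subadditivity Lemma \ref{subadditive} gives $\Upsilon_{0,a}\ge\frac{d^2}{a^2}\Upsilon_{\infty,a}+\frac{b^2}{a^2}\mathcal I_{V(y),a}$, which, using $\Upsilon_{\infty,a}>\Upsilon_{0,a}$ and $\mathcal I_{V(y),a}\ge\Upsilon_{0,a}$, can only be consistent when $b=a$ and $V(y)=V_0$.

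Finally, once $|v|_2=a$, strong convergence $v_n\to v$ in $X$ follows exactly as in the last paragraph of the proof of Theorem \ref{compactness}: $L^2$-norm preservation plus Br\'ezis--Lieb forces $L^2$-strong convergence, interpolation with \eqref{subcritical} yields $\int F_2(v_n)\to\int F_2(v)$, and the equality $I_{V(y)}(v)=\Upsilon_{0,a}$ forces convergence of $\int|\nabla v_n|^2$ and $\int F_1(v_n)$, which by $F_1\in(\Delta_2)$ lifts to convergence in $L^{F_1}(\mathbb{R}^N)$ and hence in $X$. The main obstacle I anticipate is Step 4: making the Fatou/splitting argument tight enough that the strict inequality in Corollary \ref{coro1} genuinely rules out $V(y)>V_0$, since one must simultaneously handle the possibility of partial mass escape to infinity (where the comparison level is $\Upsilon_{\infty,a}$) and localized mass at a non-optimal point of $V$.
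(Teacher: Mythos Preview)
Your overall strategy is in the right direction, but you overlook a one-line observation that the paper exploits and that collapses most of your Steps 3--5. Since $V\ge V_0$ pointwise, one has
\[
I_{\epsilon_n}(u_n)\ \ge\ I_0(u_n)\ =\ I_0(v_n)\ \ge\ \Upsilon_{0,a},
\]
so $I_0(v_n)\to\Upsilon_{0,a}$, i.e., $(v_n)$ is a minimizing sequence for the \emph{autonomous} level $\Upsilon_{0,a}=\mathcal I_{V_0,a}$. Theorem \ref{compactness} then applies directly: because $v_n\rightharpoonup v\neq 0$, the first alternative of that theorem gives $v_n\to v$ strongly in $X$ with $v\in S(a)$. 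Only \emph{after} this strong convergence does the paper turn to $(y_n)$: the limit in $I_{\epsilon_n}(u_n)$ can then be computed cleanly (the tail $\int_{B_\rho^c}|v_n|^2$ is controlled by $\int_{B_\rho^c}|v|^2$), yielding $\Upsilon_{0,a}\ge \Upsilon_{\infty,a}$ if $|y_n|\to\infty$ and $\Upsilon_{0,a}\ge \Upsilon_{V(y),a}$ otherwise, and Corollary \ref{coro1} forces $y\in M$.

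Relative to this, your argument has two concrete gaps. In Step 3 you try to prove $(y_n)$ bounded \emph{before} strong convergence; but in your splitting on $B_\rho$ versus $B_\rho^c$, the term $\frac{1}{2}\int_{B_\rho^c}(V(\epsilon_n x+y_n)-V_\infty)|v_n|^2$ can only be bounded below by $\frac{V_0-V_\infty}{2}\int_{B_\rho^c}|v_n|^2$, and without tightness of $(v_n)$ in $L^2$ you cannot make this small uniformly in $n$ by sending $\rho\to\infty$ (Lemma \ref{nontrivial} works precisely because there $u_n\to 0$ on bounded sets). In Step 4 the claimed splitting $\Upsilon_{0,a}\ge \frac{d^2}{a^2}\Upsilon_{\infty,a}+\frac{b^2}{a^2}\mathcal I_{V(y),a}$ is unjustified: the ``escaping'' part $w_n=v_n-v$ sees the potential at points $\epsilon_n x+y_n$ with $y_n\to y$ but $\epsilon_n x$ not necessarily large, so only the crude bound $V\ge V_0$ is available, giving $I_{\epsilon_n}(T_{\tilde y_n}w_n)\ge \Upsilon_{0,d_n}$ rather than $\Upsilon_{\infty,d_n}$. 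With this correction the subadditivity contradiction still goes through---but that is exactly re-proving Theorem \ref{compactness} inside the present argument, whereas the sandwich $I_{\epsilon_n}\ge I_0$ lets you invoke it directly.
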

\begin{proof}
 Firstly, we claim that there are $R_0, \tau>0$ and $\tilde{y}_n \in \mathbb{R}^N$ such that
$$
\int_{B_{R_0\left(\bar{y}_n\right)}}\left|u_n\right|^2 d x \geq \tau, \quad \forall n \in \mathbb{N} \text {. }
$$
Otherwise, by Lions Lemma  we would have $u_n \rightarrow 0$ in $L^p\left(\mathbb{R}^N\right)$ for all $p \in\left(2,2^*\right)$, which leads to $\int_{\mathbb{R}^N} F_2\left(u_n\right) d x \rightarrow 0$. Since $F_1 \geq 0$, we must have $\displaystyle \lim _{n \rightarrow+\infty} I_{\epsilon_n}\left(u_n\right) \geq 0$. However, this contradicts the fact that $\displaystyle \lim _{n \rightarrow+\infty} I_{\epsilon_n}\left(u_n\right)=\Upsilon_{0, a}<0$. Therefore, fixing $v_n(x)=u_n\left(x+\tilde{y}_n\right)$, there is $v \in X \backslash\{0\}$ such that, for some subsequence, $v_n \rightharpoonup v$ in $X$. Since $\left(v_n\right) \subset S(a)$ and $I_{\epsilon_n}\left(u_n\right) \geq I_0\left(u_n\right)=I_0\left(v_n\right) \geq \Upsilon_{0, a}$, it follows that $I_0\left(v_n\right) \rightarrow \Upsilon_{0, a}$. By Theorem \ref{compactness}, we have $v_n \rightarrow v$ in $X$ and $v \in S(a)$.

\begin{claim} \label{Claim 5.2.} $\left(y_n\right)$ is bounded.
\end{claim}

Indeed, if for some subsequence $\left|y_n\right| \rightarrow+\infty$, the following limit holds
$$
\Upsilon_{0, a}=\lim _{n \rightarrow+\infty}\Big(\frac{1}{2}\int_{\mathbb{R}^N}|\nabla v_n|^2 d x+\int_{\mathbb{R}^N}( V(\epsilon_n x+y_n)+1)|v_n|^2 d x+\int_{\mathbb{R}^N} F_1(v_n)d x-\int_{\mathbb{R}^N} F_2(v_n)d x\Big).
$$
This leads to
$$
\Upsilon_{0, a} \geq \frac{1}{2}\int_{\mathbb{R}^N}|\nabla v|^2 d x+\frac{1}{2}(V_{\infty} +1)\int_{\mathbb{R}^N}|v|^2 d x+\int_{\mathbb{R}^N} F_1(v)d x-\int_{\mathbb{R}^N} F_2(v)d x \geq \Upsilon_{\infty, a},
$$
which contradicts \eqref{comparison}.
Since Claim  \ref{Claim 5.2.} holds, we can assume that $y_n \rightarrow y$ in $\mathbb{R}^N$. Following a similar argument as above, we obtain
$$
\Upsilon_{0, a} \geq \frac{1}{2}\int_{\mathbb{R}^N}|\nabla v|^2 d x+\frac{1}{2}(V(y)+1)\int_{\mathbb{R}^N}|v|^2 d x+\int_{\mathbb{R}^N} F_1(v)d x-\int_{\mathbb{R}^N} F_2(v)d x  \geq \Upsilon_{V(y), a} .
$$
By Corollary \ref{coro1}, we have $\Upsilon_{V(y), a}>\Upsilon_{0, a}$ when $V(y)>V_0$. Since $V(y) \geq V_0$ for all $y \in \mathbb{R}^N$, the above inequality implies that $V(y)=V_0$, which means $y \in M$.
\end{proof}
Let $h:[0,+\infty) \rightarrow[0,+\infty)$ be a positive function such that $h(\epsilon) \rightarrow 0$ as $\epsilon \rightarrow 0$ and let
\begin{equation}\label{sublevel}
\tilde{S}(a)=\left\{u \in S(a): I_\epsilon(u) \leq \Upsilon_{0, a}+h(\epsilon)\right\}.
\end{equation}
Thanks to Lemma \ref{compactness12}, we can choose $h(\epsilon)=\sup _{y \in M}\left|I_\epsilon\left(\Phi_\epsilon(y)\right)-\Upsilon_{0, a}\right|$. Therefore, we have \linebreak $\Phi_\epsilon(y) \in \bar{S}(a)$ for all $y \in M$.\\

We have the following relation between $\tilde{S}(a)$ and the barycenter map $\beta_\epsilon(u)$.
\begin{lemma}\label{concentration}
 Let $\delta>0$ and $M_\delta=\left\{x \in \mathbb{R}^N: \operatorname{dist}(x, M) \leq \delta\right\}$. Then,
$$
\lim _{\epsilon \rightarrow 0} \sup _{u \in \bar{S}(a)} \inf _{z \in M_\delta}\left|\beta_\epsilon(u)-z\right|=0 .
$$
 \end{lemma}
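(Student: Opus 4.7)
The plan is to argue by contradiction in the standard Lusternik-Schnirelmann style. Suppose the conclusion fails: then there exist $\delta_0>0$, a sequence $\epsilon_n\to 0^+$, and functions $u_n\in\tilde{S}(a)$ such that
$$
\inf_{z\in M_\delta}\bigl|\beta_{\epsilon_n}(u_n)-z\bigr|\ \geq\ \delta_0,\qquad \forall\, n\in\mathbb{N}.
$$
The goal will be to produce, from this sequence, a point $y\in M$ to which $\beta_{\epsilon_n}(u_n)$ converges, yielding the desired contradiction since $M\subset M_\delta$.

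First I would verify that $I_{\epsilon_n}(u_n)\to \Upsilon_{0,a}$. Since $V(\epsilon x)\geq V_0$, one has $I_{\epsilon_n}(u_n)\geq I_0(u_n)\geq \Upsilon_{0,a}$, while by definition of $\tilde{S}(a)$ and the choice of $h$ (from Lemma \ref{compactness12}) we have $I_{\epsilon_n}(u_n)\leq \Upsilon_{0,a}+h(\epsilon_n)$ with $h(\epsilon_n)\to 0$. Hence $(u_n)\subset S(a)$ is a minimizing sequence in the sense of Proposition \ref{com}. Applying that proposition, I extract $\tilde{y}_n\in\mathbb{R}^N$ such that $v_n(x):=u_n(x+\tilde{y}_n)$ converges, up to a subsequence, to some $v\in S(a)$ strongly in $X$, and moreover $y_n:=\epsilon_n\tilde{y}_n\to y$ with $y\in M$.

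Next I would translate the barycenter: by the change of variables $z=x-\tilde{y}_n$,
$$
\beta_{\epsilon_n}(u_n)\ =\ \frac{1}{a^2}\int_{\mathbb{R}^N}\chi(\epsilon_n x)|u_n(x)|^2\,dx\ =\ \frac{1}{a^2}\int_{\mathbb{R}^N}\chi(\epsilon_n z+y_n)|v_n(z)|^2\,dz.
$$
Since $\chi$ is continuous and bounded by $R$, since $y_n\to y$ with $y\in M\subset B_R(0)$ (so $\chi$ is the identity near $y$), and since $|v_n|^2\to|v|^2$ in $L^1(\mathbb{R}^N)$ with $|v|_2^2=a^2$, the Lebesgue dominated convergence theorem gives
$$
\beta_{\epsilon_n}(u_n)\ \longrightarrow\ \frac{1}{a^2}\,\chi(y)\int_{\mathbb{R}^N}|v|^2\,dz\ =\ \chi(y)\ =\ y\ \in\ M\ \subset\ M_\delta,
$$
contradicting the assumed lower bound $\delta_0$.

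The only nontrivial ingredient is the compactness furnished by Proposition \ref{com}, which is already in place; once that is invoked, the rest is a routine translation plus dominated convergence argument. I expect the main bookkeeping point to be the careful handling of the translation in the barycenter together with the fact that $y\in M\subset B_R(0)$ forces $\chi(y)=y$, which is what makes the cut-off map $\chi$ transparent in the limit.
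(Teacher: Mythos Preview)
Your proposal is correct and follows essentially the same route as the paper: both invoke Proposition~\ref{com} to obtain the translated sequence $v_n\to v$ in $X$ with $y_n=\epsilon_n\tilde{y}_n\to y\in M$, and then compute the barycenter after the change of variables $z=x-\tilde{y}_n$. The only cosmetic difference is that the paper writes $\beta_{\epsilon_n}(u_n)=y_n+\frac{1}{a^2}\int(\chi(\epsilon_n z+y_n)-y_n)|v_n|^2\,dz$ and shows $\beta_{\epsilon_n}(u_n)-y_n\to 0$, whereas you show directly that $\beta_{\epsilon_n}(u_n)\to \chi(y)=y$; these are equivalent since $y_n\to y$.
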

\begin{proof}
Let $\epsilon_n \rightarrow 0$ and $u_n \in \tilde{S}(a)$ such that
$$
\inf _{z \in M_\delta}\left|\beta_{\epsilon_n}\left(u_n\right)-z\right|=\sup _{u \in \bar{S}(a)} \inf _{y \in M_\delta}\left|\beta_{\epsilon_n}\left(u_n\right)-z\right|+o_n(1) .
$$
From the above equality, it suffices to find a sequence $\left(y_n\right) \subset M_\delta$ such that
$$
\lim _{n \rightarrow+\infty}\left|\beta_\epsilon\left(u_n\right)-y_n\right|=0.
$$
Since $u_n \in \tilde{S}(a)$, we have
$$
\Upsilon_{0, a} \leq I_0\left(u_n\right) \leq I_{\epsilon_n}\left(u_n\right) \leq \Upsilon_{0, a}+h\left(\epsilon_n\right), \quad \forall n \in \mathbb{N},
$$
and so,
$$
u_n \in S(a)\, \text { and }\, I_{\epsilon_n}\left(u_n\right) \rightarrow \Upsilon_{0, a}.
$$
From Proposition \ref{com} , there exists $\left(\tilde{y}_n\right) \subset \mathbb{R}^N$ such that $y_n=\epsilon_n \tilde{y}_n \rightarrow y$ for some $y \in M$ and $v_n(x)=u_n\left(x+\tilde{y}_n\right)$ is strongly convergent to some $v \in X_{0}$ with $v \neq 0$. Consequently, $\left(y_n\right) \subset M_\delta$ for $n\in\mathbb{N}$ large enough and
$$
\beta_{\epsilon_n}\left(u_n\right)=y_n+\frac{\int_{\mathbb{R}^N}\left(\chi\left(\epsilon_n z+y_n\right)-y_n\right)\left|v_n\right|^2 d z}{a^2}.
$$
This implies
$$
\beta_{\epsilon_n}\left(u_n\right)-y_n=\frac{\int_{\mathbb{R}^N}\left(\chi\left(\epsilon_n z+y_n\right)-y_n\right)\left|v_n\right|^2 d z}{a^2} \rightarrow 0 \quad \text { as } n \rightarrow+\infty,
$$
which proves the lemma.
\end{proof}
\subsection{Proof of Theorem \ref{T1}}
We will divide the proof into two parts:\\

Part I: Multiplicity of solutions. Let $\epsilon \in\left(0, \epsilon_0\right)$. By applying Lemmas \ref{compactness12}, \ref{nonempty} and \ref{concentration}, we can follow the argument in \cite{CLa} to conclude that $\beta_\epsilon \circ \Phi_\epsilon$ is homotopic to the inclusion map $\text{id}:M \rightarrow M_\delta$. Thus, we have
$$
\operatorname{cat}(\tilde{S}(a)) \geq \operatorname{cat}_{M_\delta}(M)
$$
Using arguments similar to Lemma \ref{coercive}, we can also show that $I_\epsilon$ is bounded from below on $S(a)$. Since  the functional $I_\epsilon$ satisfies the $(P S)_c$ condition for $c \in\left(\Upsilon_{0, a}, \Upsilon_{0, a}+h(\epsilon)\right)$ by Lemma \ref{compactness1234}, we can apply the Lusternik-Schnirelman category of critical points (see \cite{Gh} and \cite{Willem}) to conclude that $J_\epsilon$ has at last $\operatorname{cat}_{M_\delta}(M)$ critical points on $S(a)$.\\

Part II: The behavior of maximum points of $\left|u_\epsilon\right|$. Let $u_\epsilon$ a solution of \eqref{11} with $I_\epsilon\left(u_\epsilon\right) \leq \Upsilon_{0, a}+h(\epsilon)$, where $h$ was given in \eqref{sublevel}. Following the proof of Proposition \ref{com}, for each $\epsilon_n \rightarrow 0$, there is $\tilde{y}_n \in \mathbb{R}^N$ such that $y_n=\epsilon_n \tilde{y}_n \rightarrow y$ with $y \in M$ and $v_n(x)=u_{\epsilon_n}\left(x+\tilde{y}_n\right)$ is strongly convergent to $v \in X$ with $v \neq 0$. As $v_n$ is a solution of
$$
-\Delta v_n+V\left(\epsilon_n x+y_n\right) v_n=\lambda_n v_n+v_n \log v_n^2, \,\,\text { in } \,\,\mathbb{R}^N,
$$
with
$$
\limsup _{\epsilon \rightarrow 0} \lambda_n \leq \frac{\rho_1+\Upsilon_{0, a}}{a^2}<0,
$$
the convergence $v_n \rightarrow v$ in $X$ permits to apply the same arguments found in  \cite[Lemma 4.5]{AFi}. This implies that
$$
\lim _{|x| \rightarrow+\infty} v_n(x)=0, \quad \text { uniformly in } \mathbb{N}
$$
which means that given $\tau>0$, there are $R_1>0$ and $n_0 \in \mathbb{N}$ such that
$$
\left|v_n(x)\right| \leq \tau \quad \text { for } \quad|x| \geq R_1 \quad \text { and } \quad n \geq n_0.
$$
We claim that $\left|v_n\right|_{\infty} \nrightarrow 0$. Otherwise, if $\left|v_n\right|_{\infty} \nrightarrow 0$,  we would have that $v_n \rightarrow 0$ in $X$, which is impossible since $\left|v_n\right|_2=a$ for all $n \in \mathbb{N}$. Now, let us fix $\tau>0$ such that $\left|v_n\right|_{\infty} \geq 2 \tau$ and consider $z_n \in \mathbb{R}^N$ satisfying $\left|v_n\left(z_n\right)\right|=\left|v_n\right|_{\infty}$ for all $n \in \mathbb{N}$. The  analysis above ensures that $\left|z_n\right| \leq R_1$ for all $n \in \mathbb{N}$. Furthermore, let $\xi_n \in \mathbb{R}^N$ such that $\left|u_n\left(\xi_n\right)\right|=\left|u_n\right|_{\infty}$ for all $n \in \mathbb{N}$. Then, we have $\xi_n=z_n+\tilde{y}_n$ and
$$
\lim _{n \rightarrow+\infty} V\left(\epsilon_n \xi_n\right)=\lim _{n \rightarrow+\infty} V\left(\epsilon_n z_n+\epsilon_n \tilde{y}_n\right)=V(y)=V_0
$$
which completes the proof.

\noindent \textsc{Claudianor O. Alves } \\
Unidade Acad\^{e}mica de Matem\'atica\\
Universidade Federal de Campina Grande \\
Campina Grande, PB, CEP:58429-900, Brazil \\
\texttt{coalves@mat.ufcg.edu.br} \\
\noindent and \\
\noindent \textsc{Chao Ji} \\
Department of Mathematics\\
East China University of Science and Technology \\
Shanghai 200237, PR China \\
\texttt{jichao@ecust.edu.cn}\\

\end{document}